\theoremstyle{plain}
\newtheorem{theorem}{Theorem}
\newtheorem{lemma}{Lemma}
\theoremstyle{definition}
\newtheorem{example}{Example}
\newtheorem{remark}{Remark}
\newtheorem{assumption}{Assumption}
\numberwithin{theorem}{section}
\numberwithin{corollary}{section}
\numberwithin{lemma}{section}
\numberwithin{definition}{section}
\numberwithin{example}{section}
\numberwithin{remark}{section}
\numberwithin{proposition}{section}
\numberwithin{assumption}{section}
\definecolor{MY}{rgb}{0.5,0,0.45}
\newcommand{\1}{\mathds 1}
\newcommand{\eps}{\varepsilon}
\newcommand{\eE}{\mathcal{E}}
\newcommand{\hH}{\mathcal{H}}
\def\al{\alpha}
\def\be{\beta}
\def\la{\lambda}
\def\si{\sigma}
\def\eps{\varepsilon}
\def\Om{\Omega}
\def\dOm{\partial \Omega}
\def\lp{\left(}
\def\rp{\right)}
\begin{document}

\title[]{A Cheeger-type inequality for the drift Laplacian with Wentzell-type boundary condition}

\author{Marie Bormann}
\address{Universit\"at Leipzig, Fakult\"at f\"ur Mathematik und Informatik, Augustusplatz 10, 04109 Leipzig, Germany and Max Planck Institute for Mathematics in the Sciences, 04103 Leipzig, Germany}
\email{bormann@math.uni-leipzig.de}

\begin{abstract}
We prove lower bounds for the first non-trivial eigenvalue of the drift Laplacian on manifolds with Wentzell-type boundary condition in terms of some Cheeger-type constants for bulk-boundary interactions. Our results are in the spirit of Cheeger's classical inequality.
\end{abstract}

\date{\today}

\maketitle

Let $\Om$ be a smooth compact connected Riemannian manifold of dimension $d \ge 2$ with smooth connected boundary $\dOm$. We consider the following eigenvalue problem on $\Om$
\begin{equation}\label{eq:evprob}
	\begin{cases}
		div(\al\nabla f) = - \la \al f &\text{ in } \Om^\circ,\\[0.1cm]
		-\al \partial_N f +\delta div^\tau(\be \nabla^\tau f)= - \la \be f &\text{ on }\dOm,
	\end{cases}
\end{equation}
where $\al$ and $\be$ are appropriate weight functions, $\delta\ge0$, $div^\tau,\nabla^\tau$ denote divergence and gradient on $\dOm$ and $\partial_N$ is the derivative in the direction of the outward-pointing unit normal vector field at the boundary. We aim at proving lower bounds for the first non-trivial eigenvalue in the spirit of the classical Cheeger inequality.\\ 
Note that in this eigenvalue problem the eigenvalue appears in both equations. Such problems have been considered e.g.\ in~\cite{binding, below, gilles, buoso, barbu}. For sufficiently regular solutions combining the two equations in~\eqref{eq:evprob} at the boundary results in
\begin{equation*}
	\frac{div(\al\nabla f)}{\al} + \frac{\al \partial_N f}{\be} - \frac{\delta div^\tau(\be \nabla^\tau f)}{\be} =0,
\end{equation*}
i.e.\ a Wentzell-type boundary condition. These are known to arise when allowing for dynamical boundary behaviour, cf.~\cite{MR2215623}.

The left-hand side in \eqref{eq:evprob} is the infinitesimal generator of a Markov process that we refer to as doubly weighted Brownian motion on $\Om$ with sticky reflection from $\dOm$ and with ($\delta>0$) or without ($\delta=0$) weighted Brownian diffusion along the boundary. We rely on the construction of the process in~\cite{gv} where it is defined via the corresponding Dirichlet form. Furthermore also the associated Fokker-Planck equation has been of interest recently, see~\cite{casteras2025stickyreflectingdiffusionwassersteingradient,casteras2025largedeviationsstickyreflectingbrownian}. For further references on first constructions and previous results concerning this process see e.g.\ the introductions of~\cite{gv} and~\cite{brw}. 
By bounding from below the first non-trivial eigenvalue in \eqref{eq:evprob} we gain information on the speed of convergence to equilibrium of this process. 
Upper bounds for the Poincar\'{e} constant (corresponding to lower bounds for the spectral gap) for this process have previously been proved in~\cite{mvr, brw, bormann2024functionalinequalitiesdoublyweighted} in terms of bounds on Ricci and sectional curvature on $\Om$ as well as on the second fundamental form on $\dOm$. 

For simplicity we restrict the discussion to $\delta\in\{0,1\}$ in the following. We formulate the following assumption on the weight functions $\al,\be$ and assume that it is fulfilled throughout the paper.
\begin{assumption}\label{as:weights1}
	Denote by $\la$ the volume measure on $\Om$ and by $\si$ the Hausdorff measure on $\dOm$.
	Let $\al,\be \in C(\Om), \al>0\ \la$-a.e on $\Om$ and $\be>0\ \si$-a.e. on $\dOm$ such that $\sqrt{\al}\in H^{1,2}(\Om)$ and $\sqrt{\be}\in H^{1,2}(\dOm)$ and such that $\mu=\al\la+\be\si$ is a probability measure.\\
	Furthermore, let $\al,\be$ be such that the Sobolev embeddings $H^{1,2}(\Om,\al)\to L^2(\Om,\al)$ and $H^{1,2}(\dOm,\be)\to L^2(\dOm,\be)$ as well as the Sobolev trace operator $H^{1,2}(\Om,\al)\to L^2(\dOm,\be)$ are compact. 
\end{assumption}

\section{Connections with classical results}

We first recall the Dirichlet, Neumann and Steklov eigenvalue problems, for which Cheeger-type inequalities are known. For consistency with the problem we want to study we also include weight functions here. 
Indeed for weighted Dirichlet or Neumann Laplacian and doubly weighted Dirichlet-to-Neumann operator it is known (see e.g.~\cite{setti,MR3310527,jammes}) that the respective spectrum consists of a countable sequence of nonnegative (positive for the Dirichlet case) eigenvalues with no finite accumulation point. The respective eigenvalue problems are
\begin{align*}
	\begin{cases}
		div(\al \nabla f) = -\lambda^D \al f &\text{in } \Om^\circ \\
		f = 0 &\text{on } \dOm
	\end{cases},
	\hspace{0.2cm}
	\begin{cases}
		div(\al \nabla f) = -\lambda^N \al f &\text{in } \Om^\circ\\
		\partial_N f = 0 &\text{on } \dOm
	\end{cases},
	\hspace{0.2cm}
	\begin{cases}
		div(\al \nabla f) = 0 &\text{in } \Om^\circ\\
		\al \partial_N f = \lambda^S \be f &\text{on } \dOm.
	\end{cases}
\end{align*}
We also state the variational characterisations for these eigenvalues
\begin{align}
	\lambda^D_k = \min_{E\in\eE_0(k+1)}  \max_{0\neq f\in E} \frac{\int_\Om |\nabla f|^2 \al d\la }{\int_\Om f^2 \al d\la}, \label{eq:varcharD} \\ 
	\lambda^N_k = \min_{E\in\eE(k+1)}  \max_{0\neq f\in E} \frac{\int_\Om |\nabla f|^2 \al d\la }{\int_\Om f^2 \al d\la}, \label{eq:varcharN}\\ 
	\lambda^S_k = \min_{E\in\eE(k+1)}  \max_{0\neq f\in E} \frac{\int_\Om |\nabla f|^2 \al d\la }{\int_{\dOm} f^2 \be d\si}, \label{eq:varcharS} 
\end{align}
where $\eE(k)$ is the set of all $k$-dimensional subspaces of $H^1(\Om,\al)$ and $\eE_0(k)$ is the set of all $k$-dimensional subspaces of $\{f\in H^1(\Om,\al)\ |\ f\vert_{\dOm}=0\}$.\\
In~\cite{cheeger} (see also~\cite{MR3923535, MR4267584} for a more general weighted setting) the Cheeger constant $h_C$ has been introduced and shown to provide a lower bound for the first non-trivial eigenvalue of the Dirichlet or Neumann Laplacian.
More precisely, if $\Om$ is a compact $d$-dimensional weighted Riemannian manifold with boundary $\dOm$ then
\begin{equation}\label{eq:cheeger}
h_C:=\inf_{|M|_\al\le |\Om|_\al/2} \frac{|\partial_I M|_\al}{|M|_\al} \text{ and } \la^N_1 \ge \frac{h_C^2}{4},\ \la^D_0 \ge \frac{h_C^2}{4},
\end{equation}
where $\partial_I M:=\partial M \cap \Om^\circ$ and $|\cdot|_\al$ denotes the $\al$-weighted $d$ resp. $(d-1)$-dimensional volume. 
Optimality of the factor $1/4$ has been shown in~\cite{MR478248}.
Furthermore, in~\cite{jammes} a similar constant $h_J$ has been introduced in order to bound from below the first non-trivial Steklov eigenvalue. In particular, for a compact manifold $\Om$ with boundary $\dOm$
\begin{equation}\label{eq:jammes}
h_J := \inf_{|M|_\al\le |\Om|_\al/2} \frac{|\partial_I M|_\al}{|\partial_E M|_\be} \text{ and } \la^S_1 \ge \frac{h_C \cdot h_J}{4},
\end{equation}
where $\partial_E M:=\partial M \cap \dOm$, $|\cdot|_\be$ denotes the $\beta-$weighted $(d-1)$-dim. volume and $h_C$ is the $\al$-weighted Cheeger constant as in \eqref{eq:cheeger}. 
In the following sections we follow the procedure in~\cite{jammes} in order to define constants $h_B, h_D$ and $h_E$ that provide lower bounds on the first non-trivial eigenvalue in \eqref{eq:evprob} for $\delta=1$ or for $\delta=0$, i.e.\ for doubly weighted Brownian motion with sticky reflection and with or without weighted boundary diffusion. We have written the above-mentioned Cheeger-type constants as infima with the restriction $|M|_\al\le |\Om|_\al/2$. We will also discuss two possible replacements of this restriction.

\section{$\delta=0$ - Weighted Brownian motion with sticky reflection}

We write the eigenvalue problem~\eqref{eq:evprob} for the case $\delta=0$.
\begin{equation}\label{eq:wbmwsr}
	\begin{cases}
		div(\al\nabla f) = -\la^{SR} \al f &\text{ in } \Om^\circ,\\
		-\al \partial_N f = -\la^{SR} \be f &\text{ on }\dOm.
	\end{cases}
\end{equation}
This is associated to the infinitesimal generator of Brownian motion on $\Om$ with sticky reflection from $\dOm$ (but without boundary diffusion) doubly weighted according to the measure $\mu=\al\la + \be\si$. Note also that the eigenvalue problem~\eqref{eq:wbmwsr} is different from the one for the Laplacian with Robin boundary conditions where the parameter in the boundary condition is fixed in advance and not determined as an eigenvalue. See e.g.~\cite{2123-5972,MR3681148} for an introduction as well as a Cheeger-type inequality for the Robin eigenvalue problem.\\

We may repeat the arguments in sections 3 and 4 of~\cite{gilles} in our setting including weight functions to obtain the following Lemma. 

\begin{lemma}
The eigenvalues of problem \eqref{eq:wbmwsr} form a countably infinite set \\$\{\la^{SR}_{k}\ |\ k\in\mathbb{N}\}\subset \mathbb{R}_+$ without finite accumulation point, and so its elements may be
arranged in an increasing sequence
\begin{equation*}
	0=\la^{SR}_{0} < \la^{SR}_{1} \le \la^{SR}_{2} \le \ldots \text{ with } \lim_{k\to\infty} \la^{SR}_{k} = +\infty,
\end{equation*}
and the variational characterisation of these eigenvalues is
\begin{equation}\label{eq:varchar}
	\la^{SR}_{k} = \min_{E\in\hH_{k+1}} \max_{u\in E\setminus\{0\}} \frac{\int_\Om |\nabla u|^2\al d\la}{\int_\Om u^2\al d\la + \int_{\dOm} u^2 \be d\si },
\end{equation}
where $\hH_k$ denotes the set of $k$-dimensional subspaces of $H^1(\Om,\al)$.	
\end{lemma}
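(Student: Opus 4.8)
The plan is to realise \eqref{eq:wbmwsr} as the spectral problem of a nonnegative self-adjoint operator with compact resolvent on $L^2(\mu)$ and then to deduce every assertion from the spectral theorem together with the Courant--Fischer min--max principle, following the transcription of sections~3 and~4 of~\cite{gilles} to the weighted setting. Write $L^2(\mu)\cong L^2(\Om,\al)\oplus L^2(\dOm,\be)$ (the two summands being genuinely independent coordinates since $\la(\dOm)=0$), with inner product $\langle u,v\rangle_\mu=\int_\Om uv\,\al\,d\la+\int_{\dOm} uv\,\be\,d\si$. Identifying $u\in H^{1,2}(\Om,\al)$ with the pair formed by $u$ on $\Om^\circ$ and its trace $u|_{\dOm}$ on $\dOm$ --- an injection into $L^2(\mu)$ because the trace operator is bounded under Assumption~\ref{as:weights1} --- consider the symmetric bilinear form
\[
	a(u,v):=\int_\Om \nabla u\cdot\nabla v\,\al\,d\la,\qquad D(a):=H^{1,2}(\Om,\al)\subset L^2(\mu).
\]
Its Rayleigh quotient with respect to $\langle\cdot,\cdot\rangle_\mu$ is exactly the quotient in~\eqref{eq:varchar}.

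First I would verify that $a$ is densely defined, symmetric, nonnegative and closed on $L^2(\mu)$. Density of $D(a)$ is standard (smooth functions are dense and their bulk values and boundary traces can be prescribed independently up to arbitrarily small error); symmetry and nonnegativity are immediate. Closedness means that the form norm $u\mapsto\big(a(u,u)+\|u\|_\mu^2\big)^{1/2}$ is complete on $D(a)$; this holds because, by boundedness of the trace operator, that norm is equivalent to the $H^{1,2}(\Om,\al)$ norm, which is complete. By the one-to-one correspondence between closed nonnegative symmetric forms and nonnegative self-adjoint operators, $a$ is the form of a unique such operator $L$ on $L^2(\mu)$. A weighted Green formula, $\int_\Om\nabla u\cdot\nabla v\,\al\,d\la=-\int_\Om v\,div(\al\nabla u)\,d\la+\int_{\dOm} v\,\al\,\partial_N u\,d\si$ for sufficiently regular $u$, shows that on its domain $L$ acts as $-\al^{-1}div(\al\nabla\,\cdot\,)$ in $\Om^\circ$ and as $\be^{-1}\al\,\partial_N(\,\cdot\,)$ on $\dOm$, so that $Lu=\la u$ is precisely the weak form of the eigenvalue problem~\eqref{eq:wbmwsr}; in particular all eigenvalues are real and, by nonnegativity of $a$, lie in $\mathbb{R}_+$.

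Next, the embedding $\big(D(a),\|\cdot\|_a\big)\hookrightarrow L^2(\mu)$ is compact: a sequence bounded in $H^{1,2}(\Om,\al)$ has, by the compact Sobolev embedding $H^{1,2}(\Om,\al)\to L^2(\Om,\al)$ and the compact trace embedding $H^{1,2}(\Om,\al)\to L^2(\dOm,\be)$ from Assumption~\ref{as:weights1}, a subsequence converging in $L^2(\Om,\al)$ and in $L^2(\dOm,\be)$, hence in $L^2(\mu)$. Consequently $L$ has compact resolvent; since $L^2(\mu)$ is infinite-dimensional, its spectrum consists of a countably infinite nondecreasing sequence $0\le\la^{SR}_0\le\la^{SR}_1\le\ldots$ of eigenvalues of finite multiplicity with $\la^{SR}_k\to+\infty$, and the Courant--Fischer theorem yields~\eqref{eq:varchar} with $\hH_{k+1}$ ranging over the $(k+1)$-dimensional subspaces of $D(a)=H^{1,2}(\Om,\al)$. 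Finally, $a(u,u)=0$ forces $\nabla u=0$ $\la$-a.e., so by connectedness of $\Om$ the function $u$ is constant and its trace equals the same constant; hence $\ker L$ is exactly the one-dimensional space of constants, whence $\la^{SR}_0=0$ and $\la^{SR}_1>0$.

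The step requiring the most care is the closedness of $a$ together with the identification of $L$ with the boundary value problem~\eqref{eq:wbmwsr}: one must check that the boundary trace term of $\langle\cdot,\cdot\rangle_\mu$ is compatible with the form domain (so that $D(a)$ is genuinely a Hilbert space under $\|\cdot\|_a$ --- this is where Assumption~\ref{as:weights1} enters already at the level of boundedness, not just compactness, of the trace), and that a Green-type identity and the requisite elliptic regularity are available in this weighted, low-regularity setting. Everything else is a routine adaptation of the unweighted arguments of~\cite{gilles}.
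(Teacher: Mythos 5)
Your proposal is correct and follows essentially the same route the paper intends: the paper proves this lemma only by remarking that the arguments of sections 3 and 4 of~\cite{gilles} can be repeated with the weights $\al,\be$, and what you write out --- realising the Rayleigh quotient as a closed nonnegative form on $L^2(\mu)=L^2(\Om,\al)\oplus L^2(\dOm,\be)$, using the boundedness and compactness of the embeddings and trace from Assumption~\ref{as:weights1} to get a self-adjoint operator with compact resolvent, and invoking Courant--Fischer plus connectedness for the simplicity of $\la^{SR}_0=0$ --- is precisely that transcription. No discrepancy to report.
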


As before we drop the dependence on $\al,\be$ in our notation for the eigenvalues. From the previous Lemma and equations \eqref{eq:varcharD}-\eqref{eq:varcharS} we may deduce the following Lemma. 

\begin{lemma}[Comparison of eigenvalues]\label{lem:evcomp}
For $k\in\mathbb{N}$
\begin{equation*}
	\la^{SR}_{k} \le \la_{k}^N, \ \la^{SR}_{k}\le \la_{k}^D,\text{ and }  \la^{SR}_{k} \le \la_{k}^S.
\end{equation*}
\end{lemma}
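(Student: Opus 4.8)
The plan is to prove each of the three inequalities by a direct comparison of the variational characterisations. The key observation is that all four eigenvalue problems have the \emph{same} Rayleigh-quotient numerator, namely $\int_\Om|\nabla u|^2\,\al\,d\la$, and that the denominator appearing in \eqref{eq:varchar} for $\la^{SR}_k$, i.e.\ $\int_\Om u^2\,\al\,d\la+\int_{\dOm}u^2\,\be\,d\si$, dominates pointwise each of the denominators $\int_\Om u^2\,\al\,d\la$ (Dirichlet/Neumann) and $\int_{\dOm}u^2\,\be\,d\si$ (Steklov), since the omitted term is nonnegative. A larger denominator makes the Rayleigh quotient smaller, so one expects $\la^{SR}_k$ to be the smallest in each pairwise comparison. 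The subtlety is that this is not merely a pointwise quotient inequality: the min-max is taken over different families of subspaces, so one must argue that each competing family embeds into, or can be used inside, $\hH_{k+1}$.

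First I would recall from \eqref{eq:varchar} and the Lemma preceding it that
\begin{equation*}
	\la^{SR}_{k} = \min_{E\in\hH_{k+1}} \max_{u\in E\setminus\{0\}} \frac{\int_\Om |\nabla u|^2\al\,d\la}{\int_\Om u^2\al\,d\la + \int_{\dOm} u^2\be\,d\si},
\end{equation*}
with $\hH_{k+1}$ the set of $(k+1)$-dimensional subspaces of $H^1(\Om,\al)$. For the Neumann comparison, note that $\eE(k+1)$ in \eqref{eq:varcharN} is exactly the same family $\hH_{k+1}$. Hence for any $E\in\hH_{k+1}$ and any $0\neq u\in E$, since $\int_{\dOm}u^2\be\,d\si\ge0$,
\begin{equation*}
	\frac{\int_\Om |\nabla u|^2\al\,d\la}{\int_\Om u^2\al\,d\la + \int_{\dOm} u^2\be\,d\si}\le \frac{\int_\Om |\nabla u|^2\al\,d\la}{\int_\Om u^2\al\,d\la},
\end{equation*}
so the max over $E$ of the left side is $\le$ the max of the right side, and taking the min over $E\in\hH_{k+1}=\eE(k+1)$ gives $\la^{SR}_k\le\la^N_k$. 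For the Dirichlet comparison, observe that $\eE_0(k+1)\subset\eE(k+1)=\hH_{k+1}$ (subspaces of functions vanishing on $\dOm$ are in particular subspaces of $H^1(\Om,\al)$), and on such subspaces $\int_{\dOm}u^2\be\,d\si=0$, so the $\la^{SR}$ Rayleigh quotient restricted to $E\in\eE_0(k+1)$ coincides with the Dirichlet one; taking the min over the smaller family $\eE_0(k+1)$ can only increase the value, hence $\la^{SR}_k\le\min_{E\in\eE_0(k+1)}\max_{u\in E\setminus\{0\}}(\cdots)=\la^D_k$. The Steklov comparison is the same as the Neumann one with the roles of the two denominator terms swapped: for $E\in\hH_{k+1}=\eE(k+1)$ and $0\neq u\in E$ with $\int_{\dOm}u^2\be\,d\si>0$ one has $\int_\Om u^2\al\,d\la+\int_{\dOm}u^2\be\,d\si\ge\int_{\dOm}u^2\be\,d\si$, giving the pointwise quotient bound and hence $\la^{SR}_k\le\la^S_k$ after max and min.

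The only genuine point requiring care — and the closest thing to an obstacle — is well-definedness of the Rayleigh quotients when the relevant denominator can vanish on a nonzero $u\in E$: for the Steklov case one must check that for a subspace $E$ achieving (or nearly achieving) the minimum there is no $0\neq u\in E$ with $u|_{\dOm}=0$, which is standard since otherwise the Steklov quotient is $+\infty$ and $E$ is not a minimiser; for the Dirichlet case one uses that $\eE_0(k+1)$ already consists of functions with $u|_{\dOm}=0$ so $\int_\Om u^2\al\,d\la>0$ for $u\neq0$ by the Poincaré inequality under Assumption~\ref{as:weights1}. With these conventions the three inequalities follow immediately from the monotonicity of $t\mapsto a/t$ and the inclusion $\eE_0(k+1)\subset\hH_{k+1}$ together with $\eE(k+1)=\hH_{k+1}$, so no further estimates are needed.
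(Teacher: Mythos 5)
Your proposal is correct and is exactly the argument the paper intends: the paper simply states that the lemma follows from the variational characterisations \eqref{eq:varcharD}--\eqref{eq:varcharS} and \eqref{eq:varchar}, and your comparison of Rayleigh quotients (same numerator, larger denominator for $\la^{SR}$, with $\eE(k+1)=\hH_{k+1}$ and $\eE_0(k+1)\subset\hH_{k+1}$) fills in precisely those details. The care you take about vanishing denominators is a reasonable, standard addition and does not change the route.
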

Thus none of the known Cheeger-type inequalities for Neumann, Dirichlet or Steklov spectral gap as stated in \eqref{eq:cheeger} and \eqref{eq:jammes} automatically provide a lower bound on $\la^{SR}_1$. Contrarily any lower bound for $\la^{SR}_1$ will also provide a lower bound for the other three spectral gaps.\\
We now define a further Cheeger-type constant
\begin{equation*}
\overline{h}_B := \inf_{|A|_\al + |\partial_E A|_\be\le 1/2} \frac{|\partial_I A|_\al}{|A|_\al+|\partial_E A|_\be}.
\end{equation*}
The 'correct' restriction in the infimum for all of the Cheeger-type constants considered so far is not obvious. In fact we can consider the condition $|A|_\al\le |\Om|_\al/2$ (we denote the corresponding constants by $h_C,h_J,h_B$), or we may instead consider $|\partial_E A|_\be\le |\dOm|_\be/2$ (we denote the corresponding constants by $\tilde{h}_C,\tilde{h}_J,\tilde{h}_B$), or we may consider $|A|_\al+|\partial_E A|_\be\le 1/2$ (we denote the corresponding constants by $\overline{h}_C,\overline{h}_J,\overline{h}_B$). Note that $\tilde{h}_J$ has been considered before in~\cite{escobar,MR1696453}.\\
In general the constants $h_\cdot,\tilde{h}_\cdot,\overline{h}_\cdot$ differ from each other. One may construct a simple example similar to the usual dumbbell but with one of the dumbbell balls replaced by a square of the same volume and constant weight functions. In this example $h_C,\tilde{h}_C,\overline{h}_C$ resp. $h_J,\tilde{h}_J,\overline{h}_J$ clearly differ.

For the sake of completeness we state the following Lemma on the relations of these different constants.

\begin{lemma}[Comparison of Cheeger-type constants]\label{lem:ccomp}
\begin{gather*}
\tilde{h}_C \le \overline{h}_C \le h_C, \ h_J \le \overline{h}_J \le \tilde{h}_J,\ h_B \le \overline{h}_B,\ \tilde{h}_B \le \overline{h}_B, \\
h_B \le h_C,\ h_B\le h_J, \ h_B\ge \frac{min(h_C,h_J)}{2}.
\end{gather*}
Analogous relations to the second line also hold for $\overline{h}$ and $\tilde{h}$ everywhere instead of $h$.
Furthermore, all of the 9 Cheeger-type constants discussed here are strictly positive.
\end{lemma}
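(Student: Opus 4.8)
The plan is to prove the chain of inequalities one link at a time, in each case exploiting that all three constants are infima of the \emph{same} functional $|\partial_I A|_\al/(\text{denominator})$ over the \emph{same} family of subsets $A$, differing only in the admissibility restriction on $A$ or on its complement $A^c := \Om\setminus A$. Throughout I would freely use the elementary bookkeeping identities $|A|_\al + |A^c|_\al = |\Om|_\al$, $|\partial_E A|_\be + |\partial_E A^c|_\be = |\dOm|_\be$, $\partial_I A = \partial_I A^c$, and the normalisation $|\Om|_\al + |\dOm|_\be = 1$ coming from Assumption~\ref{as:weights1}. The key structural observation is that for any admissible $A$ appearing in one constant, either $A$ or $A^c$ satisfies the restriction for another, and since the numerator $|\partial_I A|_\al$ is symmetric under $A\leftrightarrow A^c$ while the denominator only gets smaller (or stays within a controlled factor) when we pass to the "correct half", the infimum can only decrease or stay comparable.

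\textbf{Step 1: The purely restriction-changing inequalities.} For $\tilde h_C \le \overline h_C \le h_C$ and $h_J \le \overline h_J \le \tilde h_J$ and $h_B,\tilde h_B \le \overline h_B$: here numerator and denominator of the functional are fixed; only the admissible family changes. The point is that $\{A : |A|_\al + |\partial_E A|_\be \le 1/2\}$ contains every $A$ with $|A|_\al \le |\Om|_\al/2$ \emph{after possibly replacing $A$ by $A^c$} — indeed if $|A|_\al \le |\Om|_\al/2$ but $|A|_\al + |\partial_E A|_\be > 1/2$, then by the normalisation $|A^c|_\al + |\partial_E A^c|_\be < 1/2$, and the functional value for $A^c$ is $|\partial_I A^c|_\al / (|A^c|_\al + |\partial_E A^c|_\be) \le |\partial_I A|_\al/(|A|_\al + |\partial_E A|_\be)$ since $|\partial_I A^c|_\al = |\partial_I A|_\al$ and $|A^c|_\al + |\partial_E A^c|_\be < 1/2 < |A|_\al + |\partial_E A|_\be$. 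Running this argument and its transposes across the three restrictions and the two functionals $h_C,h_B$-type and $h_J$-type gives all six inequalities in the first line; one must be careful that for $h_J$ and $\tilde h_J$ the denominator is $|\partial_E A|_\be$, so the relevant comparison uses $|\partial_E A|_\be \le |\dOm|_\be/2$ versus $|A|_\al + |\partial_E A|_\be \le 1/2$, and the inequality flips direction accordingly (hence $h_J \le \overline h_J \le \tilde h_J$, not the reverse).

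\textbf{Step 2: The relations $h_B \le h_C$, $h_B \le h_J$, $h_B \ge \tfrac12\min(h_C,h_J)$.} For $h_B \le h_C$: given admissible $A$ for $h_C$, i.e.\ $|A|_\al \le |\Om|_\al/2$, it is also admissible for $h_B$, and $|\partial_I A|_\al/(|A|_\al + |\partial_E A|_\be) \le |\partial_I A|_\al/|A|_\al$, so $h_B \le h_C$; similarly $h_B \le h_J$ comparing denominators $|A|_\al + |\partial_E A|_\be \ge |\partial_E A|_\be$. For the lower bound, take any admissible $A$ for $h_B$ and split: $|A|_\al + |\partial_E A|_\be \le 2\max(|A|_\al, |\partial_E A|_\be)$. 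If the max is $|A|_\al$, then $|\partial_I A|_\al/(|A|_\al + |\partial_E A|_\be) \ge \tfrac12 |\partial_I A|_\al/|A|_\al \ge \tfrac12 h_C$ — \emph{provided} $A$ is still admissible for $h_C$, which it is since $|A|_\al \le |A|_\al + |\partial_E A|_\be \le |\Om|_\al/2$ using $|\dOm|_\be \ge 0$ and the normalisation... here one must double-check that $|A|_\al \le 1/2$ indeed forces $|A|_\al \le |\Om|_\al/2$, which is \emph{false} in general; instead the correct route is: $|A|_\al \le 1/2$ does not directly give $|A|_\al \le |\Om|_\al/2$, so I would instead argue that if $|A|_\al > |\Om|_\al/2$ one passes to $A^c$, whose $h_C$-denominator is $\le |\Om|_\al/2$ and whose $|A^c|_\al + |\partial_E A^c|_\be$ may be large, but then $|\partial_I A|_\al$ is already bounded below using the \emph{other} case. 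This case-splitting — matching which of $A$ or $A^c$ is admissible for which constant while keeping the numerator fixed — is exactly the delicate part and mirrors the argument in~\cite{jammes}; I expect it to be the main obstacle, and resolving it cleanly may require the intermediate bound $h_B \ge \tfrac12\min(h_C, h_J)$ to be proved by contradiction: if $h_B < \tfrac12\min(h_C,h_J)$, pick a near-minimiser $A$, deduce both $|\partial_I A|_\al < \tfrac12 h_C |A|_\al + \tfrac12 h_J |\partial_E A|_\be$ roughly, and derive a violation after replacing $A$ by $A^c$ if needed. The analogous relations for $\overline h$ and $\tilde h$ follow by rerunning Steps 1–2 verbatim with the restriction fixed throughout (so no $A\leftrightarrow A^c$ swap is ever needed, which actually makes those cases \emph{easier}).

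\textbf{Step 3: Strict positivity of all nine constants.} It suffices to show $h_B > 0$, since $\tilde h_B, \overline h_B \ge h_B$ wait — the inequalities go $h_B \le \overline h_B$, so positivity of $h_B$ does not immediately give positivity of the larger ones; rather, positivity of the \emph{smallest} in each comparison chain suffices, and from Lemma's first line $h_B \le \overline h_B$, $\tilde h_B \le \overline h_B$, $h_C \ge \overline h_C \ge \tilde h_C$, $\tilde h_J \ge \overline h_J \ge h_J$, so it is enough to prove $\tilde h_C > 0$, $h_J > 0$, $h_B > 0$, $\tilde h_B > 0$ — or, cleanest, to prove all via one mechanism. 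The standard mechanism: each such Cheeger-type constant is bounded below by (a constant times) the relevant spectral gap or by a relative isoperimetric inequality. Concretely, by Lemma~\ref{lem:evcomp} and the reverse direction of Cheeger's inequality, or more directly by invoking the compact Sobolev and trace embeddings in Assumption~\ref{as:weights1}, one obtains a relative isoperimetric inequality of the form $|\partial_I A|_\al \ge c\,\min(|A|_\al + |\partial_E A|_\be,\ |A^c|_\al + |\partial_E A^c|_\be)$ for some $c > 0$ depending only on $\Om, \al, \be$; this immediately forces $\overline h_B > 0$ and hence, retracing, all nine. The argument for this relative isoperimetric inequality is itself a compactness/contradiction argument: if it failed there would be sets $A_n$ with $|\partial_I A_n|_\al \to 0$ but the min of the measures bounded below, and one extracts a limit violating connectedness of $\Om$ — this is standard (it is the same argument that shows $\lambda_1^{SR} > 0$) and I would cite~\cite{gv} or~\cite{cheeger} rather than reproduce it.
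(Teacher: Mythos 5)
Your Steps 1 and 2 are essentially the paper's argument. For the first line the paper also passes to the complement when the constraint fails, using $|\partial_I A|_\al=|\partial_I A^c|_\al$ and the normalisation $|\Om|_\al+|\dOm|_\be=1$ to show the denominator can only grow. For the second line the paper uses exactly your bound $|A|_\al+|\partial_E A|_\be\le 2\max(|A|_\al,|\partial_E A|_\be)$ with a case split. The "delicate admissibility mismatch" you agonise over there is self-inflicted: by the paper's conventions $h_B$, $h_C$ and $h_J$ are \emph{all} defined with the same restriction $|A|_\al\le|\Om|_\al/2$ (likewise each of the $\overline{h}$-triple and the $\tilde{h}$-triple shares one restriction), so any $A$ admissible for $h_B$ is automatically admissible for $h_C$ and $h_J$ and the case split closes immediately — no complement swap, no contradiction argument. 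You appear to have momentarily substituted the $\overline{h}_B$-restriction $|A|_\al+|\partial_E A|_\be\le 1/2$ into the definition of $h_B$; once that is corrected, your Step 2 is complete and identical to the paper's.

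The genuine gap is Step 3. You reduce positivity to a "relative isoperimetric inequality" $|\partial_I A|_\al\ge c\min(|A|_\al+|\partial_E A|_\be,\ |A^c|_\al+|\partial_E A^c|_\be)$, but this is essentially equivalent to the positivity you are trying to prove (it is a symmetrised form of $\overline{h}_B>0$), and you do not prove it: the appeal to "a standard compactness/contradiction argument" and the citations you suggest do not contain this statement for merely continuous, a.e.-positive weights $\al,\be$. There is also a logical slip: even granting the inequality, you conclude "$\overline{h}_B>0$ and hence, retracing, all nine," but the chains run $h_B\le\overline{h}_B$ and $\tilde{h}_B\le\overline{h}_B$, so positivity of $\overline{h}_B$ says nothing about $h_B$, $\tilde{h}_B$, $\tilde{h}_C$ or $h_J$; you would need to apply the isoperimetric inequality separately to $h_B$ and $\tilde{h}_B$ (using $|A^c|_\al\ge|\Om|_\al/2$, resp. $|\partial_E A^c|_\be\ge|\dOm|_\be/2$, to control the minimum) and then propagate upward via $h_C\ge h_B$, $h_J\ge h_B$, etc. The paper avoids all of this by importing positivity of $h_C$, $h_J$ and $\tilde{h}_J$ from \cite{cheeger}, \cite{MR1696453} and \cite{jammes}, proving $\tilde{h}_C>0$ directly by a short case analysis (either a subsequence of a minimising sequence is $h_C$-admissible, or $|A_n|_\al>|\Om|_\al/2$ forces $|\partial_I A_n|_\al+|\partial_E A_n|_\be\ge c$ and one splits according to which term carries the mass, using $\tilde{h}_J>0$ in the second case), and then deducing the remaining six constants' positivity from the already-established comparison inequalities. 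You should either adopt that route or supply a genuine proof of your isoperimetric claim.
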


\begin{proof}
We only show $\tilde{h}_C \le \overline{h}_C$ since all of the other inequalities in the first line of the statement can be shown analogously.\\
Let $(A_n)_{n\in\mathbb{N}}$ be a sequence of subsets of $\Om$ with $|A_n|_\al + |\partial_E A_n|_\be \le 1/2$ for all $n\in\mathbb{N}$ and such that $\lim_{n\to\infty} \frac{|\partial_I A_n|_\al}{|A_n|_\al}=\overline{h}_C$ and construct a second sequence $(B_n)_{n\in\mathbb{N}}$ via
	\begin{equation*}
	B_n:= \begin{cases} A_n &\text{ if } |\partial_E A_n|_\be\le|\dOm|_\be/2,\\
		\Om\setminus A_n &\text{else}. 
	\end{cases}
	\end{equation*}
Then $B_n$ fulfills $|\partial_E B_n|_\be\le |\dOm|_\be/2$ and $|\partial_I B_n|_\al=|\partial_I A_n|_\al$. Also either $B_n=A_n$ or 
\begin{equation*}
|A_n|_\al \le \frac{1}{2} - |\partial_E A_n|_\be < \frac{1}{2}- \frac{|\dOm|_\be}{2} = \frac{1}{2}- \frac{1-|\Om|_\al}{2} = \frac{|\Om|_\al}{2}
\end{equation*}
and thus $|B_n|_\al \ge |A_n|_\al$. Therefore
\begin{equation*}
	\frac{|\partial_I B_n|_\al}{|B_n|_\al} \le \frac{|\partial_I A_n|_\al}{|A_n|_\al}\ \forall n\in\mathbb{N}.
\end{equation*}
$\lp \frac{|\partial_I B_n|_\al}{|B_n|_\al} \rp_{n\in\mathbb{N}}$ is a bounded sequence and thus has a converging subsequence (still denoted by the index $n$). It holds
\begin{equation*}
	\lim_{n\to\infty} \frac{|\partial_I B_n|_\al}{|B_n|_\al} \le 	\lim_{n\to\infty} \frac{|\partial_I A_n|_\al}{|A_n|_\al} = \overline{h}_C.
\end{equation*}
Thus $\tilde{h}_C\le \overline{h}_C.$\\
The first two statements in the second line of the statement are obvious and concerning the third statement:
\begin{equation*}
\frac{|\partial_I A|_\al}{|A|_\al+|\partial_E A|_\be} \ge \frac{|\partial_I A|_\al}{2\max(|A|_\al,|\partial_E A|_\be)} \ge  \begin{cases} \frac{h_C}{2} &\text{ if } |A|_\al \ge |\partial_E A|_\be \\ \frac{h_J}{2} &\text{ else}\end{cases} \ge \frac{min(h_C,h_J)}{2}.
\end{equation*}
Finally, positivity of $\tilde{h}_J$ and $h_J$ has been shown in~\cite{jammes} resp.~\cite{MR1696453} and positivity of $h_C$ has been shown in~\cite{cheeger}. To show positivity of $\tilde{h}_C$ let $(A_n)_{n\in\mathbb{N}}$ be a sequence of subsets of $\Om$ with $|\partial_E A_n|_\be \le |\dOm|_\be /2$ for all $n\in\mathbb{N}$ and such that
\begin{equation*}
\lim_{n\to\infty} \frac{|\partial_I A_n|_\al}{|A_n|_\al} = \tilde{h}_C.
\end{equation*}
If there is a subsequence (still denoted by the index $n$) such that $|A_n|_\al\le |\Om|_\al/2$ for all $n\in\mathbb{N}$, then it follows that $\tilde{h}_C\ge h_C>0$. Otherwise we may assume that $|A_n|_\al>|\Om|_\al/2$ for all $n\in\mathbb{N}$, and thus there is a constant $c$ depending on the Riemannian metric such that $|\partial_I A_n|_\al + |\partial_E A_n|_\be\ge c$ for all $n\in\mathbb{N}$. Now for each $n\in\mathbb{N}$ either $|\partial_I A_n|_\al\ge c/2$ which implies 
\begin{equation*}
\frac{|\partial_I A_n|_\al}{|A_n|_\al}>\frac{c}{2|\Om|_\al}>0,
\end{equation*} 
or $|\partial_E A_n|_\be\ge c/2$ which implies 
\begin{equation*}
\frac{|\partial_I A_n|_\al}{|A_n|_\al}=\frac{|\partial_I A_n|_\al}{|\partial_E A_n|_\be}\frac{|\partial_E A_n|_\be}{|A_n|_\al}\ge \tilde{h}_J \frac{c}{2|\Om|}>0.
\end{equation*} 
It follows that $\frac{|\partial_I A_n|_\al}{|A_n|_\al}$ is bounded away from zero uniformly in $n$ and thus $\tilde{h}_J>0$. \\
Positivity of all of the other Cheeger-type constants considered here follows from positivity of $\tilde{h}_C$ and $h_J$ and the relations shown above.
\end{proof}

\begin{theorem}[Cheeger-type inequality]\label{thm:cheegerwobd}
For $\mu=\al\la+\be\si$ as introduced above it holds that 
\begin{equation*}
\la^{SR}_1 \ge \frac{\overline{h}_B \cdot \overline{h}_C}{4}.
\end{equation*}
\end{theorem}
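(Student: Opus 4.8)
The plan is to adapt Cheeger's argument, following Jammes' proof of~\eqref{eq:jammes}, by combining the coarea and layer-cake formulas with \emph{both} Cheeger-type constants $\overline h_B$ and $\overline h_C$. The key structural point is that $\overline h_B$ and $\overline h_C$ are infima over the \emph{same} family $\{A\subseteq\Om^\circ:\ |A|_\al+|\partial_E A|_\be\le 1/2\}$ of admissible sets, and that the superlevel sets of the square of the first eigenfunction all belong to this family precisely because of the orthogonality constraint $\int u\,d\mu=0$.

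Concretely, I would first fix a first nontrivial eigenfunction $u$, i.e.\ a minimiser realising $\la^{SR}_1$ in~\eqref{eq:varchar} for $k=1$; it satisfies the weak identity $\int_\Om\nabla u\cdot\nabla\ffi\,\al\,d\la=\la^{SR}_1\bigl(\int_\Om u\ffi\,\al\,d\la+\int_{\dOm}u\ffi\,\be\,d\si\bigr)$ for all $\ffi\in H^{1,2}(\Om,\al)$. Testing with $\ffi\equiv1$ gives $\int_\Om u\,\al\,d\la+\int_{\dOm}u\,\be\,d\si=0$, so $u$ changes sign and $\mu(\{u>0\})\le\tfrac12$ or $\mu(\{u<0\})\le\tfrac12$; assume the former (otherwise replace $u$ by $-u$) and set $v:=u^+$. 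Testing with $\ffi=v$ and using $\nabla u\cdot\nabla v=|\nabla v|^2$ $\la$-a.e.\ and $uv=v^2$ (also for the traces on $\dOm$) gives $N=\la^{SR}_1(B+S)$, where $N:=\int_\Om|\nabla v|^2\al\,d\la$, $B:=\int_\Om v^2\al\,d\la$, $S:=\int_{\dOm}v^2\be\,d\si$; since $u$ is continuous up to $\dOm$ one has $B>0$, hence $\la^{SR}_1=N/(B+S)$.

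Next I would set $w:=v^2$ and, for $t>0$, $A_t:=\{x\in\Om^\circ:\ w(x)>t\}$, so $\partial_E A_t=\partial A_t\cap\dOm=\overline{A_t}\cap\dOm$. Since $A_t\subseteq\{u>0\}$ and, by continuity of $u$, $\overline{A_t}\cap\dOm\subseteq\{u>0\}\cap\dOm$, every such $A_t$ satisfies
\begin{equation*}
|A_t|_\al+|\partial_E A_t|_\be\ \le\ \al\la(\{u>0\})+\be\si(\{u>0\}\cap\dOm)\ =\ \mu(\{u>0\})\ \le\ \tfrac12 ,
\end{equation*}
so $A_t$ is an admissible competitor for \emph{both} $\overline h_B$ and $\overline h_C$. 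Because $w\in H^{1,1}(\Om,\al)\cap L^\infty(\Om)$, the coarea formula gives $\int_\Om|\nabla w|\,\al\,d\la=\int_0^\infty|\partial_I A_t|_\al\,dt$ (the $\al$-weighted relative perimeter of $A_t$ in $\Om^\circ$), while the layer-cake formula on $\Om$ and on $\dOm$ gives $\int_0^\infty|A_t|_\al\,dt=B$ and $\int_0^\infty|\partial_E A_t|_\be\,dt=S$ (for a.e.\ $t$ one has $|\partial_E A_t|_\be=\be\si(\{w|_{\dOm}>t\})$). Hence, by the definitions of the two Cheeger-type constants and by Cauchy--Schwarz,
\begin{equation*}
\int_\Om|\nabla w|\,\al\,d\la\ \ge\ \overline h_B\,(B+S),
\qquad
\int_\Om|\nabla w|\,\al\,d\la\ \ge\ \overline h_C\,B,
\qquad
\int_\Om|\nabla w|\,\al\,d\la=2\!\int_\Om v|\nabla v|\,\al\,d\la\ \le\ 2\sqrt{B}\sqrt{N} .
\end{equation*}

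Finally I would combine these three estimates: from the second lower bound, $\overline h_C\sqrt B\le 2\sqrt N$, i.e.\ $B\le 4N/\overline h_C^{2}$; from the first, $\overline h_B^{2}(B+S)^2\le 4BN\le 16N^2/\overline h_C^{2}$, whence $\overline h_B\,\overline h_C\,(B+S)\le 4N$, and therefore $\la^{SR}_1=N/(B+S)\ge\overline h_B\,\overline h_C/4$. The closing algebra is immediate; the parts that require care are the regularity of $u$ up to $\dOm$ (so that $w=(u^+)^2$ is admissible in the coarea and layer-cake formulas and $B>0$), the bookkeeping identifying $|\partial_I A_t|_\al$ with the relative perimeter appearing in the coarea formula and $|\partial_E A_t|_\be$ with the boundary distribution function of $w$, and—this is really where the statement lives—the observation that the common normalisation $|A|_\al+|\partial_E A|_\be\le\tfrac12$ in the definitions of $\overline h_B$ and $\overline h_C$ is exactly what makes the sets $A_t$ simultaneously admissible for both constants.
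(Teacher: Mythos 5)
Your proposal is correct and follows essentially the same route as the paper: restrict to the positive part of the eigenfunction (normalised so that $|\{u>0\}|_\al+|\{u>0\}\cap\dOm|_\be\le 1/2$), apply the coarea and layer-cake formulas to $w=(u^+)^2$, observe that every superlevel set is simultaneously admissible for $\overline h_B$ and $\overline h_C$, and close with Cauchy--Schwarz. The only differences are presentational — you justify the sign normalisation by testing the weak formulation with $\ffi\equiv 1$, and you combine the three scalar estimates algebraically where the paper writes the same bound as a double integral over level parameters.
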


\begin{proof}
Let $f$ be an eigenfunction for the first non-trivial eigenvalue $\la^{SR}_1$ and assume that the set $\Om^+:=f^{-1}([0,\infty))$ satisfies $|\Om^+|_\al+|\partial_E \Om^+|_\be\le 1/2$ (else change the sign of $f$). $f$~restricted to $\Om^+$ is then an eigenfunction for the problem 
\begin{equation*}
	\begin{cases}
		div(\al\nabla f) = -\la \al f &\text{ in } (\Om^+)^\circ,\\
		-\al \partial_N f = -\la \be f &\text{ on } \partial_E\Om^+, \\
		f=0 &\text{ on } \partial_I \Om^+,
	\end{cases}
\end{equation*}
with eigenvalue $\la=\la^{SR}_1$ and $\partial_E \Om^+:=\partial \Om^+\cap \partial \Om, \partial_I \Om^+:=\partial \Om^+\cap \Om^\circ$. Thus via integration by parts
\begin{align*}
\la^{SR}_1 = \frac{\int_{\Om^+} |\nabla f|^2\al d\la}{\int_{\Om^+} f^2\al d\la + \int_{\dOm^+} f^2\be d\si} &= \frac{\int_{\Om^+} |\nabla f|^2\al d\la \cdot \int_{\Om^+} f^2\al d\la}{\lp \int_{\Om^+} f^2\al d\la + \int_{\dOm^+} f^2\be d\si\rp \cdot \int_{\Om^+} f^2\al d\la} \\
		&\ge \frac{\lp \int_{\Om^+} |\nabla f^2|\al d\la\rp^2}{4\lp \int_{\Om^+} f^2\al d\la + \int_{\dOm^+} f^2\be d\si\rp \cdot \int_{\Om^+} f^2\al d\la}. 
\end{align*}
We set $D_t:=f^{-1}([\sqrt{t},+\infty))$ and use the Coarea formula to obtain
\begin{align}
\int_{\Om^+} |\nabla f^2| \al d\la &= \int_{t\ge0} |\partial_I D_t|_\al dt, \label{eq:coarea}\\
\int_{\Om^+} f^2 \al d\la &= \int_{t\ge0} |D_t|_\al dt, \\
\int_{\dOm^+} f^2 \be d\si &= \int_{t\ge0} |\partial_E D_t|_\be dt. \label{eq:sublevelsets}
\end{align}
For each $t\ge0$ we have that $|D_t|_\al+ |\partial_E D_t|_\be \le |\Om^+|_\al +  |\partial_E \Om^+|_\be \le 1/2$.
Thus
\begin{align*}
\la^{SR}_1 &\ge \frac{\int_{t\ge0} \int_{s\ge0} |\partial_I D_t|_\al |\partial_I D_s|_\al \ dsdt}{4 \int_{t\ge0} \int_{s\ge0} \lp|D_t|_\al + |\partial_E D_t|_\be\rp |D_s|_\al \ ds dt} \\
	&\ge \frac{\int_{t\ge0} \int_{s\ge0} \overline{h}_B\overline{h}_C  \lp|D_t|_\al + |\partial_E D_t|_\be\rp |D_s|_\al \ dsdt}{4 \int_{t\ge0} \int_{s\ge0} \lp|D_t|_\al + |\partial_E D_t|_\be\rp |D_s|_\al \ ds dt} \\
	&= \frac{\overline{h}_B \overline{h}_C}{4}.
\end{align*}
\end{proof}

\begin{remark}
Note that by replacing the condition $|\Om^+|_\al+|\partial_E \Om^+|_\be\le 1/2$ by $|\Om^+|_\al\le |\Om|_\al/2$ or $|\partial_E \Om^+|_\be\le |\dOm|_\be/2$ we may prove in the same way
\begin{equation*}
\la^{SR}_1 \ge \frac{h_B \cdot h_C}{4}, \ \la^{SR}_1 \ge \frac{\tilde{h}_B \cdot \tilde{h}_C}{4}.
\end{equation*}
From Lemma~\ref{lem:ccomp} it is not obvious whether $\overline{h}_B\overline{h}_C$ or $h_B h_C$ is the better lower bound. The lower bound in \eqref{eq:jammes} also holds with $h$ replaced by $\tilde{h}$ (cf.~\cite{jammes}) or $\overline{h}$.
\end{remark}

From Lemma~\ref{lem:evcomp} it follows that $h_B \cdot h_C/4$ is also a lower bound for $\la_{1}^N, \  \la_{1}^D$ and $\la_{1}^S$ (and via Friedlander's inequality also for $\la^D_0$), however we also see from Lemma~\ref{lem:ccomp} that this lower bound is worse than the respective known lower bounds in \eqref{eq:cheeger} and \eqref{eq:jammes} in terms of $h_C$ and $h_J$. The analogous statement holds if $h$ is replaced with $\tilde{h}$ or $\overline{h}$.\\

It is known that dumbbell manifolds form extremal examples for the classical Cheeger constant in the sense that one may construct a sequence of manifolds of dumbbell shape for which the associated sequence of Cheeger constants converges to zero. As $h_B \le h_C$, such a sequence would also be extremal for our Cheeger-type constant $h_B$.\\
This classical example illustrates that the rate of convergence to equilibrium for reflected Brownian motion in dumbbell domains is very low, as the process can only spread out through the domain rather slowly due to the bottleneck. Adding stickiness to the boundary corresponds to slowing the process down at the boundary but it does not change the behaviour of the process away from the boundary. I.e.\ if reflected Brownian motion spreads out slowly in a particular type of domain, then so does sticky-reflecting Brownian motion. However the reverse implication is not true in general: In the next remark we consider an example where the effect of the sticky boundary behaviour is so dominant that sticky-reflecting Brownian motion spreads out very slowly in space while reflected Brownian motion fares significantly better.\\

\begin{remark}
Analogous to Remarks 3 and 4 in~\cite{jammes} one may ask whether one can also prove a lower bound for $\la^{SR}_1$ of the form $c h_C^\iota h_B^\kappa$ for universal constants $c,\iota,\kappa\ge0$ with values better than $c=1/4, \iota=\kappa=1$ as in Theorem~\ref{thm:cheegerwobd}.\\
We multiply the Riemannian metric by a factor $\lambda^2, \lambda>0$ and find that this corresponds to multiplying $\la^{SR}_1$ by $1/\la^2$, and $h_C$ by $1/\la$ while $h_B$ changes to 
\begin{equation*}
	\inf_{|A|_\al \le |\Om|_\al/2} \frac{|\partial_I A|_\al}{\la |A|_\al+|\partial_E A|_\be} \ge \begin{cases}
		\frac{1}{\la} h_B &\text{ if } \la\ge 1, \\
		h_B &\text{ if } \la<1.
	\end{cases}
\end{equation*}
From letting $\la$ tend to $+\infty$ we may deduce that a lower bound of the proposed form can only hold if $\iota+\kappa\ge 2$, while letting $\la$ tend to 0 shows that $\iota\le 2$ is necessary.\\
From Lemma~\ref{lem:ccomp} we know that $h_B\le h_C$, thus a better lower bound than the one corresponding to $c=1/4, \iota=\kappa=1$ would mean $\iota>1$ and $\kappa<1$. The following example shows that this isn't possible in general.\\
As in Example 4 in~\cite{jammes} we consider $M_n:=M\times[0,\frac{1}{n}]$ where $M$ is some compact smooth Riemannian manifold without boundary. We consider constant weight functions $\al$ and $\be$ here. Then $M_n$ is a smooth compact Riemannian manifold with a 
smooth boundary. By Lemma~\ref{lem:evcomp} it holds
\begin{equation*}
	\la^{SR}_1 \le \la_1^S,
\end{equation*}
where $\la_1^S$ is the first non-trivial Steklov eigenvalue on $M_n$ and as in Lemma 6.1 in~\cite{MR2807105} one may show that $\la_1^S \sim \frac{1}{n}$ as $n\to\infty$. It follows that $\la^{SR}_1$ also goes to zero for $n\to \infty$.\\
Furthermore, for $D\subset M_n$ it holds
\begin{equation*}
	\frac{|\partial_{I}D|_\al}{|D|_\al} >h_C(M\times \mathbb{S}^1) > 0, 
\end{equation*} 
which has been shown in Example 4 of~\cite{jammes}. Thus after taking the infimum we deduce that $h_C(M_n)$ is bounded away from zero uniformly for $n\in\mathbb{N}$. By the classical Cheeger
inequality~\eqref{eq:cheeger} it follows that $\la_1^N$ also stays bounded away from zero.\\
Finally for some $U\subset M$ sufficiently small
\begin{equation*}
	h_B(M_n)\le \frac{|\partial_I(U\times[0,1/n]))|_\al}{|U\times[0,1/n]|_\al+|\partial_E (U\times[0,1/n])|_\be} = \frac{|\partial_I U|}{|U|+n|U|},
\end{equation*}
and thus $h_B(M_n)\sim \frac{1}{n}$ as $n\to\infty$. Summing up we may deduce that $\kappa\ge 1$ is necessary. The same argument is valid for $h$ replaced by $\overline{h}$ or $\tilde{h}$ everywhere. 
\end{remark}

\begin{remark}\label{rem:constweightswobd}
We consider the case where $\al=\bar{\al}/|\Om|$, $\be=(1-\bar{\al})/|\dOm|$ for $\bar{\al}\in(0,1)$ constant, i.e.\ constant weight functions $\al,\be$. This corresponds to the following infinitesimal generator
\begin{equation*}
\bar{L}f := \1_\Om \Delta f - \1_{\dOm} \gamma \partial_N f, 
\end{equation*}
where $\gamma:=\frac{\bar{\al}}{1-\bar{\al}} \frac{|\dOm|}{|\Om|}$. For growing $\gamma$ the effect of the reflection from the boundary becomes stronger, while the effect of the stickiness loses relevance, and the diffusion thus increasingly resembles usual reflected Brownian motion without stickiness.\\
For these constant weight functions the lower bound in Theorem~\ref{thm:cheegerwobd} becomes 
\begin{equation*}
\frac{h_B \cdot h_C}{4} = \inf_{|A|\le |\Om|/2} \frac{|\partial_I A|}{|A|+\frac{1}{\gamma}|\partial_E A|}\cdot \frac{h_C}{4}.
\end{equation*}
Heuristically exchanging infimum and limit this tends to $h_C^2/4$ as $\gamma$ goes to $\infty$, thus reproducing the classical Cheeger inequality in the limit. The same holds for $h$ replaced by $\overline{h}$ or $\tilde{h}$ everywhere.
Note however that we have not rigorously shown $\Gamma$-convergence.
\end{remark}

\begin{example}
In ~\cite{brw} the exact value for the Poincar\'{e} constant/spectral gap for Brownian motion on a closed unit ball in Euclidean/hyperbolic plane with sticky reflection from the boundary (but without boundary diffusion) has been computed numerically in the setting of constant weight functions as introduced in Remark~\ref{rem:constweightswobd}. We may thus evaluate our lower bound for the spectral gap obtained from Theorem~\ref{thm:cheegerwobd} and compare it with the exact values. Note that we do not rigorously show that the infima in
the definitions of the Cheeger-type constants are obtained for our choice of subset of $\Om$.\\
To evaluate our bound for the example of a closed unit ball $\Om$ in the Euclidean plane we note that by choosing $A$ as a half circle 
\begin{equation*}
h_C = \frac{4}{\pi}, \ h_B=\inf_{|A|\le |\Om|/2} \frac{|\partial_I A|}{|A|+\frac{1}{\gamma}|\partial_E A|} \le \frac{4\al}{\pi}.
\end{equation*}
And thus 
\begin{equation*}
\frac{h_B \cdot h_C}{4} \le \frac{4\al}{\pi^2}.
\end{equation*}
Analogously for the example of a closed unit ball Ω in the hyperbolic plane note that again by choosing $A$ as a half circle 
\begin{equation*}
h_C = \frac{2}{\pi(\cosh(1)-1)}, \ h_B = \inf_{|A|\le |\Om|/2} \frac{|\partial_I A|}{|A|+\frac{1}{\gamma}|\partial_E A|} \le \frac{2\al}{\pi(\cosh(1)-1)}.
\end{equation*}
And thus 
\begin{equation*}
\frac{h_B\cdot h_C}{4} \le \frac{\al}{\pi^2(\cosh(1)-1)^2}.
\end{equation*}
As we always choose $A$ to be a half circle for this simple example it does not make a difference whether we consider $h, \overline{h}$ or $\tilde{h}$.\\
We plot the resulting curves for exact values of the spectral gap and lower bounds for $\al\in(0,1)$ for the two examples in Figures~\ref{fig:euclidcheeger} and~\ref{fig:hyperbolcheeger}. In both cases the lower bounds obtained from Theorem~\ref{thm:cheegerwobd} are not very exact. Note however for comparison that for the classical Cheeger inequality~\eqref{eq:cheeger} (again by choosing half-circles for $M$) we have in the case of a unit ball in the Euclidean plane
\begin{equation*}
\lambda_1^N \approx 3.39 \text{ and } \frac{h_C^2}{4} = \frac{4}{\pi^2} \approx 0.41,
\end{equation*}
and in the case of a unit ball in the hyperbolic plane
\begin{equation*}
\lambda_1^N \approx 2.96 \text{ and } \frac{h_C^2}{4} = \frac{1}{\pi^2(\cosh(1)-1)^2} \approx 0.34.
\end{equation*}
\end{example}

\begin{figure}
	\includegraphics[width=0.9\linewidth]{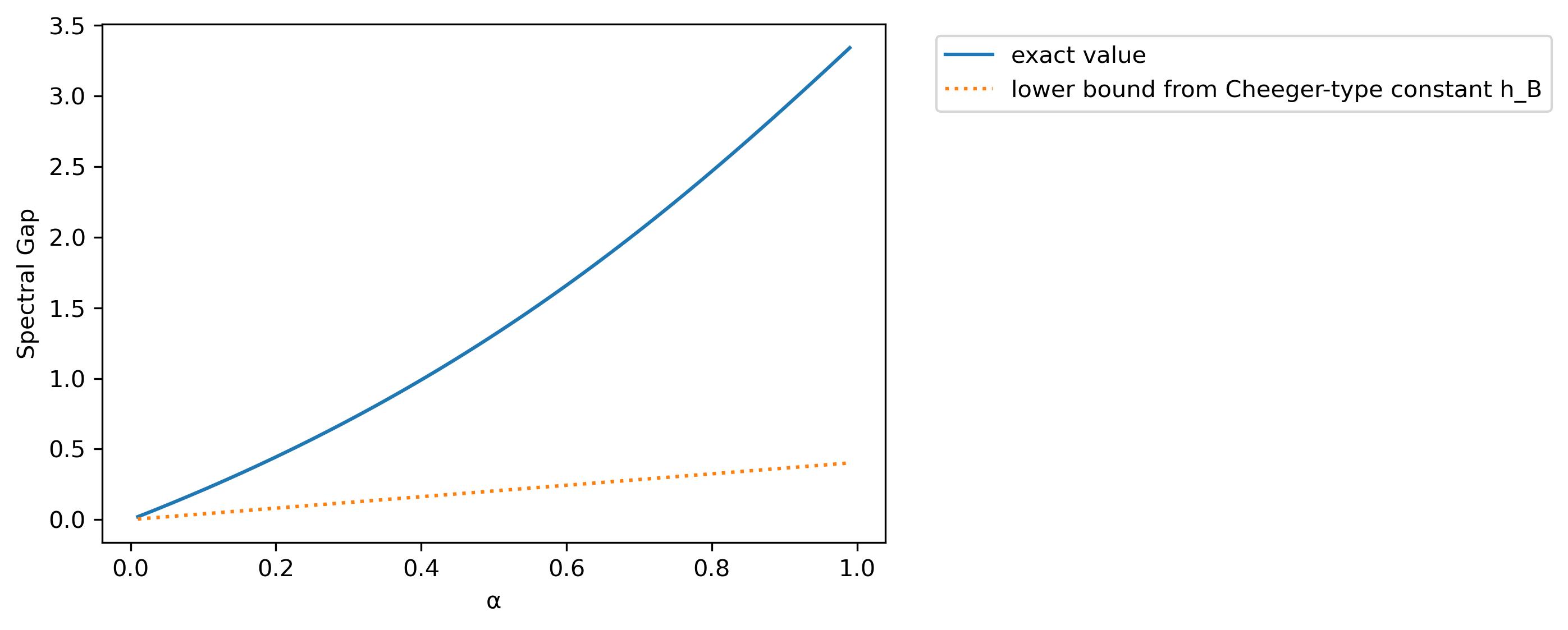}
	\caption{Exact spectral gap (blue, solid) and lower bound (yellow, dotted) for Euclidean example}
	\label{fig:euclidcheeger}
\end{figure}

\begin{figure}
	\includegraphics[width=0.9\linewidth]{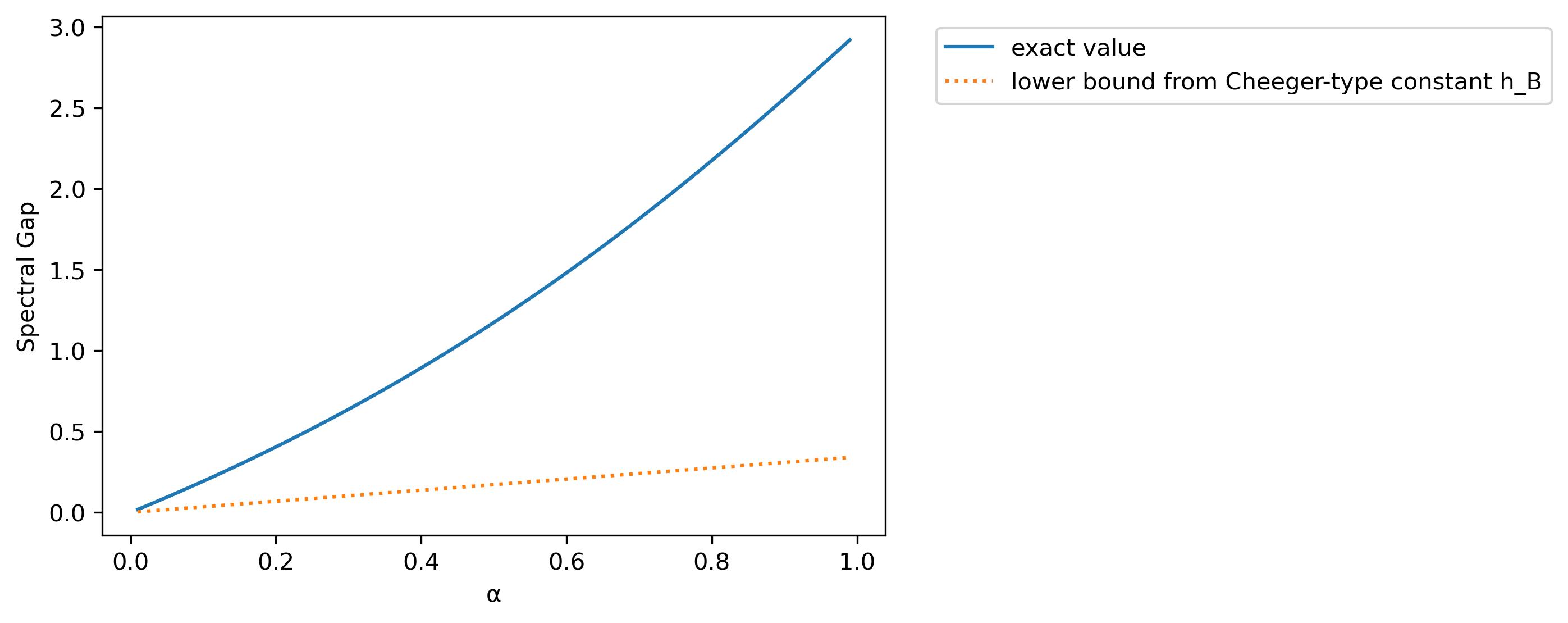}
	\caption{Exact spectral gap (blue, solid) and lower bound (yellow, dotted) for hyperbolic example}
	\label{fig:hyperbolcheeger}
\end{figure}

\section{$\delta=1$ - Weighted Brownian motion with sticky reflection and boundary diffusion}

We write the eigenvalue problem~\eqref{eq:evprob} for the case $\delta=1$.
\begin{equation}\label{eq:evprob2}
	\begin{cases}
		div(\al\nabla f) = - \la^{SRBD} \al f &\text{ in } \Om^\circ,\\
		-\al \partial_N f + div^\tau(\be \nabla^\tau f)= - \la^{SRBD} \be f &\text{ on }\dOm,
	\end{cases}
\end{equation}
This is associated to the infinitesimal generator of Brownian motion on $\Om$ with sticky reflection and boundary diffusion doubly weighted according to $\mu=\al\la + \be\si$.\\
Note that our restriction to the case $\delta=1$ in problem~\eqref{eq:evprob} instead of arbitrary $\delta>0$ is just for simplicity. In fact the case $1\neq\delta>0$ can be treated analogously to the case $\delta=1$, see Remark~\ref{rem:rem2}.\\

We may adapt the arguments in sections 3 and 4 of~\cite{gilles} to obtain the following Lemma. 

\begin{lemma}
The eigenvalues of problem \eqref{eq:evprob2} form a countably infinite set \\$\{\la^{SRBD}_{k}\ |\ k\in\mathbb{N}\}\subset \mathbb{R}_+$ without finite accumulation point, and so its elements may be
arranged in an increasing sequence
\begin{equation*}
	0=\la^{SRBD}_{0} < \la^{SRBD}_{1} \le \la^{SRBD}_{2} \le \ldots \text{ with } \lim_{k\to\infty} \la^{SRBD}_{k} = +\infty,
\end{equation*}
and the variational characterisation of these eigenvalues is
\begin{equation}\label{eq:varchar2}
	\la^{SRBD}_{k} = \min_{E\in\hH_{k+1}} \max_{u\in E\setminus\{0\}} \frac{\int_\Om |\nabla u|^2\al d\la + \int_{\dOm} |\nabla^\tau u|^2 \be d\si}{\int_\Om u^2\al d\la + \int_{\dOm} u^2 \be d\si },
\end{equation}
where $\hH_k$ denotes the set of $k$-dimensional subspaces of $\hH$ by which we denote the closure of $C^1\lp\overline{\Om}\rp$ under the norm $\|\cdot\|^2:=|\cdot|^2_{H^1(\Om^\circ,\al)} + |\cdot|^2_{H^1(\dOm,\be)}$.
\end{lemma}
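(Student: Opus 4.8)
\section*{Proof proposal}

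The plan is to follow the template of sections~3 and~4 of~\cite{gilles}, now incorporating the boundary gradient term and the weights. First I would recast~\eqref{eq:evprob2} in variational form. On the Hilbert space $\hH$ (the closure of $C^1(\overline{\Om})$ in the norm $\|\cdot\|$) introduce the symmetric nonnegative bilinear form
\[
\eE(u,v):=\int_\Om \nabla u\cdot\nabla v\,\al\,d\la + \int_{\dOm} \nabla^\tau u\cdot\nabla^\tau v\,\be\,d\si,
\]
and let $L^2(\mu)$ be realised as $L^2(\Om,\al)\oplus L^2(\dOm,\be)$, into which $\hH$ maps via $u\mapsto (u|_{\Om^\circ},u|_{\dOm})$; note that for $u\in\hH$ the boundary value $u|_{\dOm}$ is the trace of $u|_{\Om^\circ}$, since this holds on $C^1(\overline{\Om})$ and the trace operator is continuous. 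Because $\|\cdot\|^2=\eE(\cdot,\cdot)+\|\cdot\|_{L^2(\mu)}^2$ and $C^1(\overline{\Om})$ is dense in $L^2(\mu)$, the form $\eE$ is densely defined, symmetric, nonnegative and closed on $L^2(\mu)$, hence is the form of a unique nonnegative self-adjoint operator $L$ on $L^2(\mu)$. An integration by parts shows that $u\in D(L)$ with $Lu=\la u$ is precisely a weak solution of~\eqref{eq:evprob2}, and the elliptic and boundary regularity arguments of~\cite{gilles}, now with the Wentzell-type condition on $\dOm$, upgrade weak eigenfunctions to the regularity implicit in~\eqref{eq:evprob2}. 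Thus the eigenvalues of~\eqref{eq:evprob2} coincide with the spectrum of $L$.

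Second I would establish discreteness of the spectrum by showing that the embedding $\hH\hookrightarrow L^2(\mu)$ is compact. Let $(u_n)$ be bounded in $\hH$. Then $(u_n|_{\Om^\circ})$ is bounded in $H^{1,2}(\Om,\al)$ and $(u_n|_{\dOm})$ is bounded in $H^{1,2}(\dOm,\be)$; by the two Sobolev embeddings in Assumption~\ref{as:weights1} one extracts a subsequence converging in $L^2(\Om,\al)$ and in $L^2(\dOm,\be)$, and compactness of the trace operator $H^{1,2}(\Om,\al)\to L^2(\dOm,\be)$ guarantees the two limits are compatible, i.e.\ the boundary limit is the trace of the interior limit, so the subsequence converges in $L^2(\mu)$. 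Compact embedding forces $L$ to have compact resolvent, so its spectrum is a sequence of eigenvalues of finite multiplicity accumulating only at $+\infty$, and the standard Courant--Fischer min-max principle yields~\eqref{eq:varchar2}.

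Finally I would pin down $\la^{SRBD}_0=0$ and its simplicity. Constants lie in $\hH$ and are annihilated by $\eE$, so $0$ is an eigenvalue; conversely, if $\eE(u,u)=0$ then $\nabla u=0$ $\la$-a.e.\ on $\Om^\circ$, and since $\Om$ is connected $u$ equals a constant $\la$-a.e.\ on $\Om^\circ$, hence $\mu$-a.e.\ (its trace on $\dOm$ being that same constant). So the kernel of $\eE$ is one-dimensional and $0=\la^{SRBD}_0<\la^{SRBD}_1$. The main obstacle is not the abstract spectral theory but the two more technical ingredients invoked only by reference here: the compactness of $\hH\hookrightarrow L^2(\mu)$ as an interaction of bulk and boundary estimates, where the trace compactness is exactly what makes the interior and boundary limits consistent, and the regularity theory identifying weak eigenfunctions of $L$ with solutions of the Wentzell-type problem~\eqref{eq:evprob2}; this last point is what ``adapting sections~3 and~4 of~\cite{gilles}'' really entails and carries the analytic weight of the proof.
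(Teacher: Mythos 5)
Your proposal is correct and follows essentially the approach the paper itself intends: the paper gives no written proof beyond the citation "adapt sections 3 and 4 of~\cite{gilles}," and your outline (closed symmetric form on $L^2(\mu)$, compactness of $\hH\hookrightarrow L^2(\mu)$ from the compact embeddings and trace operator in Assumption~\ref{as:weights1}, Courant--Fischer min-max, and simplicity of the zero eigenvalue via connectedness) is precisely what that adaptation amounts to. The remaining technical points you flag (trace consistency in the completion, and the regularity identifying weak eigenfunctions with classical solutions of the Wentzell-type problem) are exactly the ones delegated to the cited reference.
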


From \eqref{eq:varchar} and \eqref{eq:varchar2} it is obvious that

\begin{lemma}[Comparison of eigenvalues]\label{lem:evcomp2}
For $k\in\mathbb{N}$
\begin{equation*}
	\la^{SRBD}_k\ge \la^{SR}_k.
\end{equation*}
\end{lemma}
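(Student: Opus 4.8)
The plan is to read the inequality off the two Rayleigh-quotient characterisations~\eqref{eq:varchar} and~\eqref{eq:varchar2} via the usual monotonicity of the Courant--Fischer min-max, which is the reasoning behind the excerpt's ``it is obvious''.

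First I would isolate the two structural facts that drive the comparison. On the test-space side: the norm $\|\cdot\|^2=|\cdot|^2_{H^1(\Om^\circ,\al)}+|\cdot|^2_{H^1(\dOm,\be)}$ defining $\hH$ dominates the $H^1(\Om^\circ,\al)$-norm, so there is a natural continuous map $\hH\to H^1(\Om,\al)$, and once one knows it is injective (see below) every $(k+1)$-dimensional subspace of $\hH$ is, in particular, a $(k+1)$-dimensional subspace of $H^1(\Om,\al)$; thus the family of admissible subspaces appearing in~\eqref{eq:varchar2} is contained in the one appearing in~\eqref{eq:varchar}. On the quotient side: for a fixed $u\in\hH\setminus\{0\}$ the two quotients have the same denominator $\int_\Om u^2\al\,d\la+\int_{\dOm}u^2\be\,d\si$, which is strictly positive by the compactness of the trace embedding in Assumption~\ref{as:weights1}, while the numerator in~\eqref{eq:varchar2} exceeds that in~\eqref{eq:varchar} by the nonnegative term $\int_{\dOm}|\nabla^\tau u|^2\be\,d\si$.

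Then I would chain these observations. Fixing an arbitrary admissible subspace $E$ for~\eqref{eq:varchar2}, I take the maximum over $u\in E\setminus\{0\}$ in the pointwise inequality between the two quotients and use that $E$ is also admissible for~\eqref{eq:varchar}, obtaining
\[
\la^{SR}_k\ \le\ \max_{u\in E\setminus\{0\}}\frac{\int_\Om|\nabla u|^2\al\,d\la}{\int_\Om u^2\al\,d\la+\int_{\dOm}u^2\be\,d\si}\ \le\ \max_{u\in E\setminus\{0\}}\frac{\int_\Om|\nabla u|^2\al\,d\la+\int_{\dOm}|\nabla^\tau u|^2\be\,d\si}{\int_\Om u^2\al\,d\la+\int_{\dOm}u^2\be\,d\si};
\]
here the maxima are attained since each quotient is continuous and $0$-homogeneous on the finite-dimensional space $E$, so it suffices to look at its unit sphere, on which the denominator is bounded below. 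Taking the infimum of the right-hand side over all such $E$ yields $\la^{SRBD}_k$ by~\eqref{eq:varchar2}, hence $\la^{SR}_k\le\la^{SRBD}_k$.

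The one delicate point, and where I expect the small amount of genuine work to go, is the injectivity of $\hH\to H^1(\Om,\al)$: if $(u_n)\subset C^1(\overline\Om)$ is $\|\cdot\|$-Cauchy with $u_n\to 0$ in $H^1(\Om^\circ,\al)$, then $u_n|_{\dOm}$ converges in $H^1(\dOm,\be)$ and, by continuity of the trace operator $H^1(\Om^\circ,\al)\to L^2(\dOm,\be)$, also to $0$ in $L^2(\dOm,\be)$, so the $\hH$-limit of $(u_n)$ must be $0$. Everything else is standard min-max bookkeeping: enlarging the numerator raises each inner maximum, and shrinking the family of admissible subspaces raises the outer minimum.
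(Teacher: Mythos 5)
Your proposal is correct and follows exactly the route the paper intends: the paper offers no written proof beyond declaring the inequality obvious from the variational characterisations \eqref{eq:varchar} and \eqref{eq:varchar2}, and your argument is precisely the standard min-max bookkeeping that justifies this — identical denominators, a numerator enlarged by the nonnegative term $\int_{\dOm}|\nabla^\tau u|^2\be\,d\si$, and a smaller family of admissible subspaces, all of which push the min-max value up. The extra care you take with the injectivity of $\hH\to H^1(\Om,\al)$ is a reasonable and correct addition, not a deviation.
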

\bigskip
We define the following Cheeger-type constant
\begin{equation*}
\overline{h}_D : = \inf_{A,B,C} \frac{|\partial_I A|_\al }{|A|_\al} \frac{ |\partial_I B|_\al}{ |B|_\al + |\partial_E B|_\be}
	+  \frac{|\partial\partial_E B|_\be}{|B|_\al + |\partial_E B|_\be}\frac{|\partial \partial_E C|_\be}{|\partial_E C|_\be},
\end{equation*}
where the infimum is over sets $A,B,C$ such that $|A|_\al+|\partial_E A|_\be\le 1/2$ and analogously for $B,C$ and where for $B\subset \Om$ by $\partial \partial_E B$ we denote the boundary of $\partial_E B$ in $\dOm$ and analogously for $C$. Again we may define $h_D$ and $\tilde{h}_D$ analogously by exchanging the restrictions for the infimum.\\
Furthermore we define
\begin{equation*}
\tilde{h}_E := \inf_{|\partial_E B|_\be \le |\dOm|_\be/2}  \frac{ |\partial_I B|_\al + |\partial \partial_E B|_\be}{ |B|_\al + |\partial_E B|_\be}.
\end{equation*}

\begin{theorem}[Cheeger-type inequality]\label{thm:cheegerwbd}
For $\mu=\al\la+\be\si$ as introduced above it holds that
\begin{equation*}
\la^{SRBD}_1 \ge \frac{\overline{h}_D}{4},
\end{equation*}
and the right-hand side is strictly positive. The same holds with $\overline{h}$ replaced by $\tilde{h}$ or $h$. Furthermore
\begin{equation*}
\la^{SRBD}_1 \ge \frac{min(\tilde{h}_C(\Om),h_C(\dOm))\cdot \tilde{h}_E}{4},
\end{equation*}
where 
\begin{equation*}
\tilde{h}_C(\Om):=\inf_{\substack{A\subset \Om\\ |\partial_E A|_\be \le |\dOm|_\be/2}} \frac{|\partial_I A|_\al}{|A|_\al},\  h_C(\dOm):=\inf_{\substack{C\subset \dOm\\|C|_\be\le|\dOm|_\be/2}}\frac{|\partial C|_\be}{|C|_\be}.
\end{equation*}
\end{theorem}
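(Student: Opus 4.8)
The plan is to follow the proof of Theorem~\ref{thm:cheegerwobd}, now keeping the additional tangential energy term, and to read both inequalities off one Rayleigh-quotient computation on a nodal domain. Let $f$ be an eigenfunction for $\la^{SRBD}_1$ and set $\Om^+:=f^{-1}([0,\infty))$, $\partial_I\Om^+:=\partial\Om^+\cap\Om^\circ$, $\partial_E\Om^+:=\partial\Om^+\cap\dOm$; by changing the sign of $f$ we may arrange that $\Om^+$ satisfies whichever admissibility restriction is in play ($|\Om^+|_\al+|\partial_E\Om^+|_\be\le\tfrac12$ for the $\overline{h}$-versions, $|\Om^+|_\al\le|\Om|_\al/2$ for the $h$-versions, $|\partial_E\Om^+|_\be\le|\dOm|_\be/2$ for the $\tilde{h}$-versions; one of these always holds, as in Theorem~\ref{thm:cheegerwobd}). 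On $\Om^+$ the restriction of $f$ solves the PDE of~\eqref{eq:evprob2} in the interior and the Wentzell boundary condition on $\partial_E\Om^+$ (where $f$ need not vanish), while $f=0$ on $\partial_I\Om^+$ and on the corner $\partial\partial_E\Om^+$ (both lie in $\partial\Om^+$, where $f\equiv0$). Hence Green's formula on $\Om^+$ combined with the tangential Green's formula on the $(d-1)$-manifold $\partial_E\Om^+$ gives the exact identity
\[
	\la^{SRBD}_1=\frac{\int_{\Om^+}|\nabla f|^2\al\,d\la+\int_{\partial_E\Om^+}|\nabla^\tau f|^2\be\,d\si}{\int_{\Om^+}f^2\al\,d\la+\int_{\partial_E\Om^+}f^2\be\,d\si}.
\]

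For the first inequality I set $D_t:=f^{-1}([\sqrt t,\infty))$ and use~\eqref{eq:coarea}--\eqref{eq:sublevelsets} together with their tangential analogue $\int_{\partial_E\Om^+}|\nabla^\tau f^2|\be\,d\si=\int_{t\ge0}|\partial\partial_E D_t|_\be\,dt$; writing $P:=\int_{t\ge0}|\partial_I D_t|_\al\,dt$, $Q:=\int_{t\ge0}|D_t|_\al\,dt$, $R:=\int_{t\ge0}|\partial\partial_E D_t|_\be\,dt$, $S:=\int_{t\ge0}|\partial_E D_t|_\be\,dt$, the denominator of the identity equals $Q+S$. Bounding the two numerator terms separately by Cauchy--Schwarz ($2|\nabla f|\,|f|=|\nabla f^2|$, $2|\nabla^\tau f|\,|f|=|\nabla^\tau f^2|$) after the multiply-and-divide trick with $\int_{\Om^+}f^2\al\,d\la=Q$ resp. $\int_{\partial_E\Om^+}f^2\be\,d\si=S$ yields $\la^{SRBD}_1\ge\tfrac14\big(\tfrac{P^2}{Q(Q+S)}+\tfrac{R^2}{S(Q+S)}\big)$. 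I would then bring both summands to the common denominator $QS(Q+S)$ (inserting a factor $S$ resp. $Q$ top and bottom and writing $P^2S+R^2Q$ and $QS(Q+S)$ as triple integrals over $(s,t,u)$); the resulting ratio of integrands factors exactly as $\tfrac{|\partial_I D_s|_\al}{|D_s|_\al}\cdot\tfrac{|\partial_I D_t|_\al}{|D_t|_\al+|\partial_E D_t|_\be}+\tfrac{|\partial\partial_E D_t|_\be}{|D_t|_\al+|\partial_E D_t|_\be}\cdot\tfrac{|\partial\partial_E D_u|_\be}{|\partial_E D_u|_\be}$, which is $\ge\overline{h}_D$ by definition (with $A=D_s$, $B=D_t$, $C=D_u$, all admissible since contained in $\Om^+$). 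This gives $\la^{SRBD}_1\ge\overline{h}_D/4$, and the $h_D$- and $\tilde{h}_D$-versions follow with the other two sign normalisations. Positivity is immediate: dropping the nonnegative second summand in the definition of $\overline{h}_D$ and taking the (independent) infima gives $\overline{h}_D\ge\overline{h}_C\cdot\overline{h}_B>0$ by Lemma~\ref{lem:ccomp}, and likewise $h_D\ge h_C h_B>0$, $\tilde{h}_D\ge\tilde{h}_C\tilde{h}_B>0$.

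For the second inequality I combine the two numerator terms instead: multiplying numerator and denominator of the identity by the whole denominator and applying Cauchy--Schwarz for the measure $\al\la+\be\si$ on the disjoint union $\Om^+\sqcup\partial_E\Om^+$ gives $\la^{SRBD}_1\ge(P+R)^2/(4(Q+S)^2)$. With the normalisation $|\partial_E\Om^+|_\be\le|\dOm|_\be/2$ every $D_t$ has $|\partial_E D_t|_\be\le|\dOm|_\be/2$, so $\tfrac{|\partial_I D_t|_\al+|\partial\partial_E D_t|_\be}{|D_t|_\al+|\partial_E D_t|_\be}\ge\tilde{h}_E$ by definition and, since $\tfrac{a+c}{b+d}\ge\min(\tfrac ab,\tfrac cd)$, also $\ge\min(\tfrac{|\partial_I D_t|_\al}{|D_t|_\al},\tfrac{|\partial\partial_E D_t|_\be}{|\partial_E D_t|_\be})\ge\min(\tilde{h}_C(\Om),h_C(\dOm))$, where $h_C(\dOm)>0$ is the classical Cheeger constant of the closed weighted manifold $\dOm$. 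Averaging in $t$ gives $\tfrac{P+R}{Q+S}\ge\tilde{h}_E$ and $\tfrac{P+R}{Q+S}\ge\min(\tilde{h}_C(\Om),h_C(\dOm))$; multiplying these two estimates for the same positive number yields $\big(\tfrac{P+R}{Q+S}\big)^2\ge\min(\tilde{h}_C(\Om),h_C(\dOm))\cdot\tilde{h}_E$, which is the assertion, whose right-hand side is positive because $\tilde{h}_C(\Om)=\tilde{h}_C>0$, $h_C(\dOm)>0$, and hence $\tilde{h}_E\ge\min(\tilde{h}_C(\Om),h_C(\dOm))>0$.

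The step I expect to cost the most work is making the nodal-domain reduction of the first paragraph fully rigorous: justifying the two Green's formulae on $\Om^+$ and on $\partial_E\Om^+$ notwithstanding the a priori non-smoothness of $\{f=0\}$, which relies on the interior and up-to-the-boundary regularity of eigenfunctions and on the nodal set being a hypersurface of finite perimeter, and on checking that $\{f=\sqrt t\}\cap\dOm=\partial\partial_E D_t$ for a.e.\ $t>0$ so that the tangential coarea formula applies. Everything else is the bookkeeping of Theorem~\ref{thm:cheegerwobd}; the only genuinely degenerate case is $S=\int_{\partial_E\Om^+}f^2\be\,d\si=0$ (one always has $Q>0$, otherwise $f\equiv0$), in which $R=0$ too and the triple integral collapses: the pointwise comparison then uses $A=D_s$, $B=D_t$ (with $\partial_E B=\emptyset$, so the second summand in the definition of $\overline{h}_D$ vanishes) and any admissible $C$, yielding $P^2/Q^2\ge\overline{h}_D$ and hence $\la^{SRBD}_1\ge P^2/(4Q^2)\ge\overline{h}_D/4$.
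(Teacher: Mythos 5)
Your proof of the first inequality is, up to a relabelling of the integration variables, the same as the paper's: the same nodal--domain reduction for $\Om^+=f^{-1}([0,\infty))$, the same multiplication of numerator and denominator by $\int_{\Om^+}f^2\al\,d\la\cdot\int_{\partial_E\Om^+}f^2\be\,d\si$, term-by-term Cauchy--Schwarz, the coarea formula, and the same triple-integral comparison whose integrand ratio is exactly the expression defining $\overline h_D$ with $A=D_s$, $B=D_t$, $C=D_u$; your explicit handling of the degenerate case $\int_{\partial_E\Om^+}f^2\be\,d\si=0$ (where the paper's division by that quantity would break down) is a small but genuine improvement in care. For the second inequality you take a different route: the paper obtains it in one line from $\la^{SRBD}_1\ge\tilde h_D/4$ by bounding, inside the definition of $\tilde h_D$, the factors $|\partial_I A|_\al/|A|_\al$ and $|\partial\partial_E C|_\be/|\partial_E C|_\be$ below by $\tilde h_C(\Om)$ and $h_C(\dOm)$ respectively and recognising what remains as $\tilde h_E$, whereas you re-run the Rayleigh-quotient estimate with a single Cauchy--Schwarz for the combined measure $\al\la+\be\si$, arrive at $\la^{SRBD}_1\ge(P+R)^2/(4(Q+S)^2)$, and then bound the same ratio $(P+R)/(Q+S)$ below once by $\tilde h_E$ and once by $\min(\tilde h_C(\Om),h_C(\dOm))$ via the mediant inequality. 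Both arguments are correct; the paper's is shorter because it recycles the first inequality, while yours is self-contained, avoids the triple integral for this part, and gives the additional bound $\la^{SRBD}_1\ge\tilde h_E^2/4$ as a by-product. Your positivity arguments ($\overline h_D\ge\overline h_C\overline h_B$ and $\tilde h_E\ge\min(\tilde h_C(\Om),h_C(\dOm))>0$, the latter in place of the paper's $\tilde h_E\ge\tilde h_B$) are also fine.
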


\begin{proof}
The strict positivity of $\overline{h}_D$ follows from Lemma~\ref{lem:ccomp} since $\overline{h}_D\ge \overline{h}_C\cdot\overline{h}_B$.\\
We further proceed analogously to the proof of Theorem~\ref{thm:cheegerwobd}.
Let $f$ be an eigenfunction for the first non-trivial eigenvalue $\la^{SRBD}_1$ and assume that $\Om^+:=f^{-1}([0,\infty))$ satisfies $|\Om^+|_\al + |\partial_E \Om^+|_\be \le 1/2$ (else change the sign of $f$). $f$ restricted to $\Om^+$ is then an eigenfunction for the problem
\begin{equation*}
	\begin{cases}
		div(\al\nabla f) = - \la \al f &\text{ in } (\Om^+)^\circ,\\
		-\al \partial_N f + div^\tau(\be \nabla^\tau f)= - \la \be f &\text{ on }\partial_E\Om^+,\\
		f=0 &\text{ on } \partial_I \Om^+,
	\end{cases}
\end{equation*}
with $\la=\la^{SRBD}_1$ being the first eigenvalue. It then holds
\begin{align*}
\la^{SRBD}_1 &= \frac{\int_{\Om^+} |\nabla f|^2\al d\la + \int_{\dOm^+} |\nabla^\tau f|^2\be d\si}{\int_{\Om^+} f^2\al d\la + \int_{\dOm^+} f^2\be d\si} \\
		&= \frac{\lp \int_{\Om^+} |\nabla f|^2\al d\la + \int_{\dOm^+} |\nabla^\tau f|^2\be d\si\rp \cdot \int_{\Om^+} f^2\al d\la \cdot \int_{\dOm^+} f^2\be d\si}{\lp \int_{\Om^+} f^2\al d\la + \int_{\dOm^+} f^2\be d\si\rp \cdot \int_{\Om^+} f^2\al d\la \cdot \int_{\dOm^+} f^2\be d\si} \\
		&\ge \frac{1}{4} \frac{\lp \int_{\Om^+} |\nabla f^2|\al d\la\rp^2\int_{\dOm^+} f^2\be d\si + \int_{\Om^+}f^2\al d\la \lp \int_{\dOm^+} |\nabla^\tau f^2|\be d\si\rp^2 }{\lp \int_{\Om^+} f^2\al d\la\rp^2\int_{\dOm^+} f^2\be d\si  + \int_{\Om^+} f^2\al d\la \lp\int_{\dOm^+} f^2\be d\si\rp^2} . 
\end{align*}
We set $D_t:=f^{-1}([\sqrt{t},+\infty))$ and use the Coarea formula to obtain
\begin{equation*}
\int_{\dOm^+} |\nabla^\tau f^2| \be d\si = \int_{t\ge0} |\partial \partial_E D_t|_\be dt.\\
\end{equation*}
Recall also equations~\eqref{eq:coarea}-\eqref{eq:sublevelsets} and note that $|D_t|_\al + |\partial_E D_t|_\be \le |\Om^+|_\al + |\partial_E \Om^+|_\be \le 1/2$ for each $t\ge0$.
Thus
\begin{align*}
\la^{SRBD}_1 &\ge \frac{1}{4}\frac{\int_{t\ge0} \int_{s\ge0} \int_{r\ge0} |\partial_I D_t|_\al |\partial_I D_s|_\al |\partial_E D_r|_\be + |D_t|_\al|\partial\partial_E D_s|_\be|\partial \partial_E D_r|_\be\ dr ds dt}{\int_{t\ge0} \int_{s\ge0} \int_{r\ge0} |D_t|_\al |D_s|_\al |\partial_E D_r|_\be + |D_t|_\al|\partial_E D_s|_\be|\partial_E D_r|_\be \ dr ds dt} \\
	&\ge \frac{1}{4}\frac{\int_{t\ge0} \int_{s\ge0} \int_{r\ge0} \overline{h}_D \lp   |D_t|_\al |D_s|_\al |\partial_E D_r|_\be + |D_t|_\al|\partial_E D_s|_\be|\partial_E D_r|_\be \rp\ dr ds dt}{\int_{t\ge0} \int_{s\ge0} \int_{r\ge0}  |D_t|_\al |D_s|_\al |\partial_E D_r|_\be + |D_t|_\al|\partial_E D_s|_\be|\partial_E D_r|_\be \ dr ds dt} \\
	&\ge \frac{\overline{h}_D}{4}.
\end{align*}
The statement in terms of $\tilde{h}_D$ or $h_D$ can be shown analogously by changing the assumption on $ \Om^+$. The second statement of the Proposition follows directly from $\la^{SRBD}_1 \ge \frac{\tilde{h}_D}{4}$ and the definition of $\tilde{h}_D$ and $\tilde{h}_E$. Again the right-hand side is strictly positive due to $\tilde{h}_E\ge\tilde{h}_B$ and Lemma~\ref{lem:ccomp}.
\end{proof}

\begin{remark} Unlike the lower bound in Theorem~\ref{thm:cheegerwobd} it is not possible to rewrite $h_D$ as a product of more simple infima. Note also that by taking $C$ such that $\partial_E C=\dOm$ and correspondingly $\partial \partial_E C =\emptyset$ (which can be done e.g.\ by choosing $C$ as an $\eps$-neighbourhood of the boundary)
\begin{equation*}
	h_D = h_B\cdot h_C
\end{equation*} 
would result in $h_B\cdot h_C/4$ as a lower bound for $\la^{SRBD}_1$. This seems suboptimal since we know $\la_1^{SRBD}\ge\la_1^{SR}$ from Lemma~\ref{lem:evcomp2} and would therefore prefer to not obtain the same lower bound on $\la^{SR}_1$ and $\la_1^{SRBD}$. Choosing $C$ in this way for $\overline{h}_D$ instead of $h_D$ is only possible if the weight function $\be$ is small enough. It is not possible, however, if we consider $\tilde{h}_D$. I.e.\ we get a lower bound for $\la^{SRBD}_1$ in terms of $\tilde{h}_D$ that is in general not equal to the corresponding lower bound for $\la^{SR}_1$, which seems advantageous in the light of Lemma~\ref{lem:evcomp2}.
\end{remark}

\begin{remark}\label{rem:rem2}
As in Remark~\ref{rem:constweightswobd} we may consider the case where $\al=\bar{\al}/|\Om|$, $\be=(1-\bar{\al})/|\dOm|$ for $\bar{\al},\bar{\be}\in(0,1)$ constant, i.e.\ constant weight functions $\al,\be$. This corresponds to the following infinitesimal generator
\begin{equation*}
\bar{L}f := \1_\Om \Delta f + \1_{\dOm} \lp \Delta^\tau f  - \gamma \partial_N f \rp, 
\end{equation*}
where $\gamma:=\frac{\bar{\al}}{1-\bar{\al}} \frac{|\dOm|}{|\Om|}$. For growing $\gamma$ the effect of the reflection from the boundary becomes stronger, while the effect of the stickiness and boundary diffusion loses relevance, and the diffusion thus increasingly resembles usual reflected Brownian motion without stickiness and boundary diffusion.\\
For these constant weight functions
\begin{equation*}
\overline{h}_D = \inf_{\substack{A,B,C\\|\cdot|_\al+|\partial_E \cdot|_\be\le 1/2}} \frac{ |\partial_I A|}{|A|}  \frac{|\partial_I B|}{|B| + \frac{1}{\gamma}|\partial_E B|}
+ \frac{\frac{1}{\gamma}|\partial\partial_E B|}{|B| + \frac{1}{\gamma}|\partial_E B|} \frac{|\partial \partial_E C|}{|\partial_E C|}.
\end{equation*}
If we exchange infimum and limit this tends to $\overline{h}_C^2$ as $\gamma$ goes to $\infty$, the same holds for $\overline{h}$ replaced by $\tilde{h}$ or $h$. Thus, up to our adjusted infimal condition, we again reproduce the classical Cheeger inequality in the limit. Note again that we did not prove $\Gamma$-convergence rigorously.\\
Alternatively we may also consider $\delta>0$ instead of just $\delta=1$ as above. The parameter $\delta$ controls the speed of the boundary diffusion. The corresponding generator is
\begin{equation*}
Lf := \1_\Om \lp \Delta f + \frac{\nabla \al}{\al}\cdot \nabla f \rp - \1_{\dOm} \frac{\al}{\be} \partial_N f + \1_{\dOm} \delta\lp \Delta^\tau f + \frac{\nabla^\tau \be}{\be}\cdot \nabla^\tau f\rp.
\end{equation*}
Proceeding as in the proof of Theorem~\ref{thm:cheegerwbd} we then obtain analogous lower bounds for the first eigenvalue in terms of the following Cheeger-type constants
\begin{gather*}
\tilde{h}_D := \inf_{\substack{A,B,C\\|\partial_E \cdot|_\be\le |\dOm|_\be/2}} \frac{|\partial_I A|_\al}{|A|_\al} \frac{|\partial_I B|_\al}{|B|_\al +|\partial_E B|_\be} + 
 \delta\cdot \frac{|\partial\partial_E B|_\be}{|B|_\al  + |\partial_E B|_\be} \frac{|\partial \partial_E C|_\be}{|\partial_E C|_\be},\\
\tilde{h}_E := \inf_{|\partial_E B|_\be \le |\dOm|/2}  \frac{ |\partial_I B|_\al + \delta\cdot|\partial \partial_E B|_\be}{ |B|_\al + |\partial_E B|_\be}.
\end{gather*}
Now considering $\delta\to 0$ slows down the boundary diffusion while keeping up the sticky reflection. In fact $\tilde{h}_D$ tends to $\tilde{h}_B\cdot \tilde{h}_C$ as $\delta$ tends to zero (again we exchange infimum and limit), and we thus recover the lower bound from the previous section for the case of sticky reflection without boundary diffusion.
\end{remark}

\begin{remark}
We may again ask whether/in which way dumbbell examples are extremal for $h_D$. We therefor rewrite as follows
\begin{align*}
	&\frac{|\partial_I A|_\al }{|A|_\al} \frac{ |\partial_I B|_\al}{ |B|_\al + |\partial_E B|_\be}
	+  \frac{|\partial\partial_E B|_\be}{|B|_\al + |\partial_E B|_\be}\frac{|\partial \partial_E C|_\be}{|\partial_E C|_\be}\\
	\le & \frac{ |\partial_I A|_\al |\partial_I B|_\al}{|A|_\al |B|_\al} + \frac{|\partial\partial_E B|_\be|\partial \partial_E C|_\be}{|\partial_E B|_\be|\partial_E C|_\be}.
\end{align*}
Then it is obvious that we may construct a sequence of dumbbell examples such that the corresponding values of $h_D$ converge to zero. When considering the rate of convergence to equilibrium of Brownian motion with sticky-reflecting boundary diffusion both diffusion in the interior and diffusion on the boundary are responsible for the overall spreading of the process on the domain. Thus a domain has a particularly low spectral gap if both its interior and its boundary are opposing a fast spreading of the process (as for the dumbbell) or the domain is such that stickiness at the boundary dominates the behaviour. \\
We have shown above that for Brownian motion reflected at the boundary (Neumann boundary condition) adding stickiness worsens the rate of convergence to equilibrium and adding diffusion along the boundary on top of that improves it. However we have not compared the rate of convergence to equilibrium between the cases of pure reflection and sticky-reflecting boundary diffusion. To conclude, we present an example which shows that by making the diffusion along the boundary sufficiently fast the spectral gap for the case of sticky-reflecting boundary diffusion can be larger than in the case of pure reflection. It remains open to study and quantify this in general.

\begin{example} 
We consider again the example of a closed unit ball in the Euclidean plane, but this time with constant weight functions $\al=5/7\pi, \be=1/7\pi$ (corresponding to $\gamma=5$) and varying boundary diffusion speed $\delta$. As in~\cite{mvr} we may calculate the exact value of $\la_1^{SRBD}$ for varying $\delta$ numerically and compare it with $\la_1^N\approx 3.39$ for this example.
We plot the resulting curves in Figure~\ref{fig:euclidex2}. Not that indeed for $\delta$ large enough $\la^{SRBD}_1$ becomes larger than $\la_1^N$. Of course $\la^{SRBD}_1$ will not grow arbitrarily large with growing $\delta$.
\end{example}
\begin{figure}
	\includegraphics[width=0.9\linewidth]{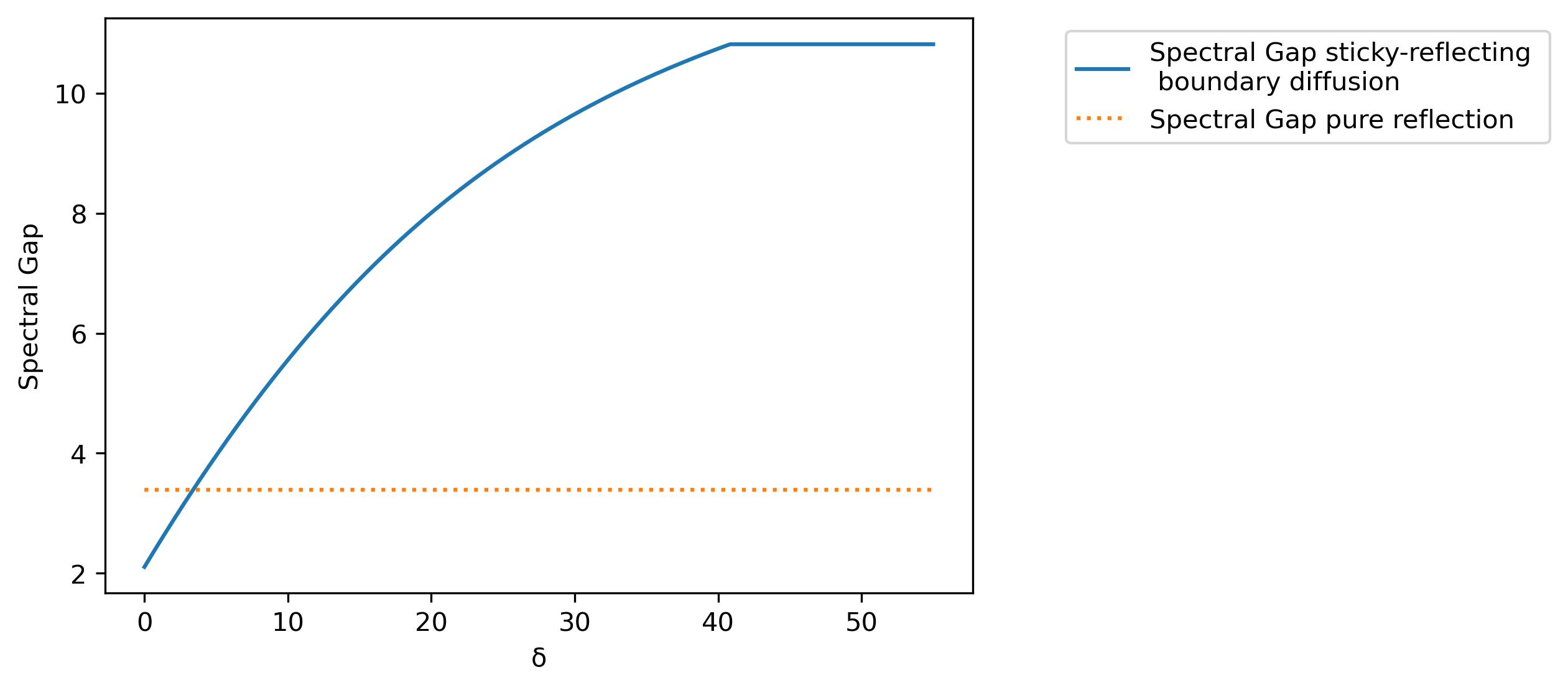}
	\caption{Spectral gap for sticky-reflecting boundary diffusion of varying speed $\delta$ (blue, solid) and spectral gap for pure reflection (yellow, dotted) for Euclidean example}
	\label{fig:euclidex2}
\end{figure}

\end{remark}

\section*{Acknowledgement}
The author would like to thank Max von Renesse for suggesting the problem. 

\bibliographystyle{abbrv}
\bibliography{bib}
\end{document}